\definecolor{oiBlue}{HTML}{56B4E9}
\definecolor{oiOrange}{HTML}{E69F00}
\definecolor{oiGreen}{HTML}{009E73}
\definecolor{oiYellow}{HTML}{F0E442}
\definecolor{oiNavy}{HTML}{0072B2}
\definecolor{oiRed}{HTML}{D55E00}
\definecolor{oiPurple}{HTML}{CC79A7}
\definecolor{oiGrey}{HTML}{999999}
\theoremstyle{plain}
\newtheorem{theorem}{Theorem}        
\newtheorem{lemma}{Lemma}            
\newtheorem{corollary}{Corollary}    
\newtheorem{proposition}{Proposition}
\newtheorem{remark}{Remark}
\newtheorem{problem}{Problem}
\newcommand{\F}{\mathbb{F}}
\newcommand{\PG}{\mathrm{PG}}
\newenvironment{reptheorem}[1]{%
  \innerthm
}{\endinnerthm}
\newenvironment{repproposition}[1]{%
  \innerprop
}{\endinnerprop}
\title{The chromatic number of finite projective spaces}
\author{
Anurag Bishnoi\thanks{Delft University of Technology, Netherlands. \emph{E-mail}: \text{a.bishnoi@tudelft.nl}.} 
\and
Wouter Cames van Batenburg \thanks{Universit\'e libre de Bruxelles, Belgium. Supported by the Belgian National Fund for Scientific Research (FNRS). \emph{E-mail}: \text{w.p.s.camesvanbatenburg@gmail.com}. }
\and 
Ananthakrishnan Ravi\thanks{Delft University of Technology, Netherlands. Supported by an NWO open competition
grant (OCENW.M.22.090). \emph{E-mail}: \text{a.ravi@tudelft.nl}.}
}
\date{}
\begin{document}
\maketitle

\begin{abstract}
The chromatic number of the finite projective space $\mathrm{PG}(n-1,q)$, denoted $\chi_q(n)$, is the minimum number of colors needed to color its points so that no line is monochromatic.
We prove subadditivity of $\chi_q(n)$ with respect to $n$, and then establish the following  stronger recursive bound:
\[
\chi_q(n)\le \chi_q(d)+\chi_q(n+1-d)-1
\] 
for all $1 \leq d < n$, and use it to prove new upper bounds on $\chi_q(n)$.
For $q = 2$, using this recursion we prove that 
\[
\chi_2(n) \le \lfloor 2n/3 \rfloor + 1
\]
for all $n \ge 2$, and we show that this bound is tight for all $n \le 7$. 
In particular, our result recovers all previously known cases for $n \le 6$ and resolves the first open case $n = 7$. 
It also disproves a conjecture of Haddad that $\chi_2(n) = n - 1$ for all $n \geq 4$, in a strong sense. 
On the lower-bound side, using a connection with multicolor Ramsey numbers for triangles, we note that
\[
\chi_2(n) \ge (1 - o(1))\,\frac{n}{\log n}.
\]

We also consider $\chi_q(t;n)$, the minimum number of colors needed to color the points of $\mathrm{PG}(n-1,q)$ with no monochromatic $(t - 1)$-dimensional subspace, and establish an equivalence between $\chi_q(t;n)$ and the multicolor vector-space Ramsey numbers $R_q(t;k)$. 
Using this equivalence together with new upper bounds on $\chi_q(t;n)$, we improve, for every fixed $t$ and $q$, the best known lower bounds on $R_q(t;k)$ from $\Omega_{q,t}(\log k)$ to $\Omega(k)$. 
\end{abstract}

\section{Introduction}\label{sec:intro}

Let $\F_2^n$ denote the $n$-dimensional vector space over $\F_2$ and let $\PG(n-1,2)$ denote the corresponding $(n-1)$-dimensional projective space.
We identify the points of $\PG(n-1,2)$ with the non-zero vectors of $\F_2^n$.
A line in $\mathrm{PG}(n - 1, 2)$ then corresponds to a triple $\{x,y,x+y\}$ with distinct $x,y\in \F_2^n\setminus\{0\}$. 
These lines define a $3$-uniform hypergraph $(V_n, E_n)$ with
\[
  V_n=\F_2^n \setminus \{0\}
  \quad\text{and}\quad
  E_n=\bigl\{\{x,y,x+y\}: x,y\in V_n,\ x\neq y\bigr\},
\]
whose chromatic number is called the chromatic number $\chi_2(n)$ of the binary projective space. 
This is the least number of colors needed to color the points of $\PG(n - 1, 2)$ such that no line is monochromatic. 
Note that a color class contains no three collinear points; that is, each color class is a (projective) \textit{cap} (see \cite{HirschfeldThas2025} for a recent survey on caps).
Therefore, $\chi_2(n)$ is also the minimum number of caps that partition $\PG(n-1,2)$. 
Equivalently, $\chi_2(n)$ is the smallest number of sum-free sets in the abelian group $\mathbb{F}_2^n$ needed to partition the set of non-zero elements.

It can be easily shown that $\chi_2(2) = 2$. 
It is a standard exercise in the context of Property~B and minimal non-2-colorable hypergraphs (see, for example, \cite[Section 1.3]{ZhaoPM}) that $\chi_2(3) = 3$, that is, the chromatic number of the Fano plane is $3$. 
The next case is $\chi_2(4) = 3$, which follows from the well-known partition of $\mathrm{PG}(3,2)$ into elliptic quadrics \cite{Ebert1985}.

The chromatic number of binary projective spaces was studied by Rosa~\cite{Rosa1970-1, Rosa1970-2} in his investigation of more general Steiner triple systems (see \cite{BrandesPhelpsRodl1982, BruenHaddadWehlau1998} for further results in this direction).
He showed $\chi_2(5)=4$, and asked if $\chi_2(6)=5$; this was confirmed by Fug\`ere, Haddad and Wehlau~\cite{Fugere1994}.
Combining our upper bound with a Ramsey lower-bound argument in Section~\ref{sec:lower}, we obtain $\chi_2(7)=5$, thus solving the next open case.

\textit{In this paper}, we prove new bounds on $\chi_2(n)$, recovering all the small cases proved earlier and improving the previous general upper bound. 
By coloring the complement of a hyperplane with one color and recursively iterating inside the hyperplane, it can be easily shown that $\chi_2(n) \leq n$. For example, let $i=1,\dots,n$, define 
\[
C_i=\{x=(x_1,\dots,x_n)\in \F_2^n\setminus\{0\} : x_j=0\ \text{for all } j<i,\ \text{and } x_i=1\}.
\]
Then $\PG(n - 1, 2) = \bigsqcup_{i=1}^n C_i$, and each $C_i$ is a cap. 
This bound can be improved to \[\chi_2(n) \le n-1\ \text{for all}\ n\ge 4,\] using the recursion $\chi_2(n)\leq \chi_2(n - 1) + 1$ that follows from the above argument, together with the well-known base case of $n = 4$. 

Since $\chi_2(4)$, $\chi_2(5)$, and $\chi_2(6)$ are $3$, $4$, and $5$, respectively, Haddad conjectured in \cite{Haddad1999} that $\chi_2(n)$ must increase by $1$ when the dimension increases by $1$ for all $n \geq 4$. 
Rosa \cite{Rosa2019} mentions that this conjecture was disproved by Blokhuis.
We were unable to find the argument of Blokhuis, but the recursion $\chi_2(n) \leq \chi_2(d) + \chi_2(n - d)$ that follows from taking quotient spaces (we prove this in a more general setting in Lemma~\ref{lem:qtrecursion2}), combined with $\chi_2(4) = 3$, implies that 
$\chi_2(n) \le 3n/4 + O(1)$ (see \Cref{cor:chi2-3n4}). 
We prove the following improvement to this upper bound. 
\begin{theorem}\label{thm:main}
For all $n \geq 2$
\[\chi_2(n)\le \lfloor 2n/3 \rfloor + 1 .\]
\end{theorem}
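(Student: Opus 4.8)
The first step is to reduce the theorem to a single recursive inequality. Put $f(n):=\lfloor 2n/3\rfloor+1$; then $f(1)=1$, $f(2)=2$, $f(3)=3$, and $f(n)=f(n-3)+2$ for every $n\ge 4$. Since $\chi_2(1)=1$ (there are no lines), $\chi_2(2)=2$ (the unique line of $\PG(1,2)$ forces two colours), and $\chi_2(3)=3$ (the Fano plane), the theorem follows by induction on $n$, decreasing in steps of three, once we establish the refined recursion
\[
\chi_2(n)\le \chi_2(n-3)+2 \qquad\text{for all } n\ge 4 .
\]
Thus the whole content of the theorem is this recursion, which saves exactly one colour over the quotient bound $\chi_2(n)\le\chi_2(n-3)+\chi_2(3)=\chi_2(n-3)+3$ of \Cref{lem:qtrecursion2}.

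I would prove the recursion by refining the quotient construction from the inside, so let me recall that construction. Decompose $\F_2^n=U\oplus W$ with $\dim W=3$, fix a good colouring $c$ of $\PG(U)=\PG(n-4,2)$ with $k:=\chi_2(n-3)$ colours and a good colouring of the Fano plane $\PG(W)$ with three \emph{fresh} colours. Colour $(u,w)$ by $c(u)$ if $w=0$ and by the Fano-colour of $w$ if $w\neq 0$. A line $\{(u_1,w_1),(u_2,w_2),(u_1+u_2,w_1+w_2)\}$ is analysed by how many of $w_1,w_2,w_1+w_2$ vanish: all three vanish (a line of $\PG(U)$, fine since $c$ is proper), exactly one vanishes (then two of the points share a fibre and the third is recursively coloured, fine since the two palettes are disjoint), or none vanishes (then the three $W$-coordinates form a line of $\PG(W)$, fine since its colouring is proper). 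The goal is to run this using only \emph{two} fresh colours.

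The difficulty is that the obvious economies fail. Identifying a fresh colour with a recursive colour $j$ does not work: a ``diagonal'' line $\{(u_1,w),(u_2,w),(u_1+u_2,0)\}$ whose first two points lie in a fresh-coloured fibre over a Fano point $w$ and whose third point is any colour-$j$ point becomes monochromatic, and such a configuration always exists because the fibre over $w$ is a full copy of $U$. Colouring each fibre monochromatically by the ``type'' of $w$ also fails: the three weight-$2$ vectors of $W$ lie on a common line of $\PG(W)$, and any type-based rule pushes the three corresponding points of $\{0\}\oplus W$ into one class. So the refined colouring must break the fibres up and interleave the two fresh colours $\alpha,\beta$ with the recursive palette: reserve $\alpha,\beta$ for two disjoint caps $S_\alpha,S_\beta$ of $\PG(W)$, colour the $S_\alpha$- and $S_\beta$-fibres and the seven points of $\{0\}\oplus W$ by hand, and colour each remaining fibre (over $w\in W\setminus(\{0\}\cup S_\alpha\cup S_\beta)$) by a translated copy $u\mapsto c(u+\psi(w))$ of the recursive colouring for a suitable $\psi$. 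One then re-runs the three-case line analysis, now tracking the translations as well.

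The step I expect to be the main obstacle is choosing $S_\alpha,S_\beta,\psi$ and the by-hand colours so that the three dangerous families of lines — lines inside $W$, lines with two points in one fibre and one in $\PG(U)$, and Fano ``diagonals'' with one point in each of three distinct fibres — are all simultaneously non-monochromatic. In naive attempts these constraints keep colliding on the weight-$2$ line of $\PG(W)$, and resolving the collision appears to require extra leverage: either a clever interaction between the choice of $S_\alpha,S_\beta$ and the translations, or working with a normalised recursive colouring $c$ in which every colour-$j$ class avoids a prescribed coset of a fixed subspace. Once such a colouring is written down, the inequality $\chi_2(n)\le\chi_2(n-3)+2$ drops out and, fed into the induction of the first paragraph, yields $\chi_2(n)\le\lfloor 2n/3\rfloor+1$. (The tightness of the bound for $n\le 7$ is a separate matter, handled via the lower bounds discussed in the introduction.)
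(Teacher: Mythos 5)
Your reduction is fine: since $f(n)=\lfloor 2n/3\rfloor+1$ satisfies $f(n)=f(n-3)+2$ and matches $\chi_2(r)$ for $r\in\{1,2,3\}$, the theorem does follow by induction once one proves $\chi_2(n)\le\chi_2(n-3)+2$ for $n\ge 4$, which is exactly the inequality the paper uses. But that inequality is the entire content of the theorem, and you never prove it. Your text recalls the easy quotient bound $\chi_2(n)\le\chi_2(n-3)+3$, explains correctly why two naive economies fail, and then only \emph{describes} a hoped-for construction: the caps $S_\alpha,S_\beta$, the translation map $\psi$, and the ``by-hand'' colours are left unspecified, the three-case line analysis is not carried out, and you yourself say that in your attempts the constraints ``keep colliding'' and that resolving this ``appears to require extra leverage.'' As written, this is a statement of the difficulty, not a proof; the gap sits precisely at the one step that makes the theorem nontrivial.

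For comparison, the paper gets around the obstacle you hit by not trying to save a colour inside a $3$-dimensional block at all (which is hard, since $\chi_2(3)=3$ and the weight-two line of $\PG(W)$ creates exactly the collisions you observed). Instead, \Cref{lem:recursion} proves $\chi_2(n)\le\chi_2(d)+\chi_2(n+1-d)-1$: take a $d$-dimensional coordinate block $U$, properly colour $U$ with $\chi_2(d)$ colours, \emph{reserve} one colour $r$ of that palette, and route every point whose $U$-part is zero or $r$-coloured into a recursion one dimension higher, namely a proper colouring of $A^+\cong\PG(n-d,2)$ obtained by appending a one-bit coordinate $t$ recording whether the $U$-part is $r$-coloured. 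Properness follows because the $r$-class is a cap, which makes $t$ additive along the relevant lines, so a monochromatic line would project to a monochromatic line either in $U$ or in $A^+$. Applying this with $d=4$ and the base case $\chi_2(4)=3$ gives $\chi_2(n)\le\chi_2(n-3)+2$ directly. If you want to complete your write-up, you either need to supply a concrete construction realizing your $S_\alpha,S_\beta,\psi$ scheme and verify all line types, or switch to a reserved-colour/lift argument of this kind.
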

We will see in Section \ref{sec:lower} that this upper bound is tight for all $n \leq 7$. 
The key ingredient in the proof of \Cref{thm:main} is the following improved recursion, which we state more generally for $\chi_q(n)$, the minimum number of colors needed to color the points of \(\mathrm{PG}(n-1,q)\) so that no projective line is monochromatic. 
For every prime power $q$ and all $1\le d\le n-1$,
\begin{equation}\label{eq:recursion}
\chi_q(n)\le \chi_q(d) + \chi_q(n+1-d)-1.
\end{equation}

Starting from the base case of $\chi_2(4)=3$, this yields \Cref{thm:main}.
See Section \ref{sec:recursion} for the proof of the recursion, which uses a ``reserved color" and a scalar ``lift". 
This mechanism is in the same spirit as the ``templates"-based approach for Schur numbers \cite{Rowley2021,Ageron2021}.

For lower bounds on $\chi_2(n)$, we use a connection with multicolor Ramsey numbers for triangles. 
Recall that $R(3;k)$ denotes the smallest $N$ such that any $k$-coloring of the edges of a complete graph $K_N$ yields a monochromatic triangle. 
Around a decade before Ramsey's foundational work, Schur~\cite{Schur1917} implicitly showed that 
$\Omega(2^k)\le R(3;k)\le O(k!)$. 
Erd\H{o}s later conjectured that $R(3;k)=2^{\Theta(k)}$.
We refer to~\cite{NR01} for a historical account.

If we could properly color $\PG(n-1,2)$ with $k$ colors, then by identifying the $2^n$ vectors of $\F_2^n$ with the vertices of a $K_{2^n}$, we could color each edge $\{u,v\}$ of $K_{2^n}$ by the color of the difference $(v-u) \in \F_2^n \setminus \{0\}$, identified as a point of $\PG(n -1, 2)$.
So, a proper $k$-coloring of $\PG(n-1,2)$ would produce a $k$-edge-coloring of $K_{2^n}$ with no monochromatic triangles, and thus
\begin{equation}\label{eq:Ramsey_connection}
\chi_2(n)\leq k \implies R(3;k) > 2^n.
\end{equation}
Our upper bound in Theorem~\ref{thm:main} then implies that
\begin{equation}\label{eq:our-r3k-lowerbound}
R(3;k) > 2^{3(k-1)/2} > 2.828^{k-1}.
\end{equation}
This connection between partitioning an abelian group into sum-free sets and $R(3;k)$ goes back to Abbott and Hanson~\cite{AbbottHanson1972} and ultimately Schur~\cite{Schur1917}.

In the other direction, using \cref{eq:Ramsey_connection} and the classical asymptotic upper bound \cite{GreenwoodGleason1955} of $R(3;k) \leq e k! + 1$, we get the following lower bound on $\chi_2(n)$. All logarithms in this paper have base $2$.
\begin{proposition}\label{prop:chin-lower}
\[
\chi_2(n) \ge (1 - o(1))\,\frac{n}{\log n}.
\]

\end{proposition}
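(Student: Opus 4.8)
The plan is to invert the chain of implications in \cref{eq:Ramsey_connection} together with the Greenwood--Gleason bound $R(3;k)\le ek!+1$. Given $n$, we want the smallest $k$ for which $\chi_2(n)\le k$, and the contrapositive of \cref{eq:Ramsey_connection} tells us that if $R(3;k)\le 2^n$, then \emph{no} proper $k$-coloring of $\PG(n-1,2)$ exists, i.e.\ $\chi_2(n)>k$. So the strategy is: find the largest $k=k(n)$ with $R(3;k)\le 2^n$ and conclude $\chi_2(n)\ge k(n)+1$; then show $k(n)+1\ge (1-o(1))\,n/\log n$.

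First I would set up the estimate for $k(n)$. Since $R(3;k)\le ek!+1\le (k+1)!$ for all $k\ge 1$ (a crude bound that suffices), it is enough to find the largest $k$ with $(k+1)!\le 2^n$, equivalently $\log((k+1)!)\le n$. By the standard factorial estimate $\log(m!)=m\log m - m/\ln 2 + O(\log m) = (1+o(1))\,m\log m$, the inequality $\log((k+1)!)\le n$ holds as soon as $(1+o(1))(k+1)\log(k+1)\le n$. Solving this for $k$: taking $k=k(n)$ to be roughly $n/\log n$ we get $(k+1)\log(k+1)=(1+o(1))\,\frac{n}{\log n}\cdot(\log n - \log\log n)=(1+o(1))\,n$, which is (asymptotically) at most $n$ — so a value of $k$ of order $(1-o(1))\,n/\log n$ is admissible. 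More carefully, one sets $k(n)=\big\lfloor (1-\varepsilon_n)\,n/\log n\big\rfloor$ for a suitable $\varepsilon_n\to 0$ and checks $\log((k(n)+1)!)\le n$ for large $n$; then $\chi_2(n)\ge k(n)+1\ge (1-o(1))\,n/\log n$.

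The only genuinely quantitative step is the Stirling-type bound and the asymptotic inversion of $m\mapsto m\log m$; I expect this to be the main (though routine) technical obstacle — one must be a little careful that the $\log\log n$ correction term in the solution of $(k+1)\log(k+1)=n$ is absorbed into the $o(1)$ and does not secretly cost a constant factor. Everything else (the implication $\chi_2(n)\le k\Rightarrow R(3;k)>2^n$ and the bound $R(3;k)\le ek!+1$) is quoted directly from the excerpt and from \cite{GreenwoodGleason1955}.

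A clean way to package the computation: observe that \cref{eq:Ramsey_connection} gives $2^n < R(3;\chi_2(n)) \le e\,\chi_2(n)!+1$, hence $n < \log(e\,\chi_2(n)!+1) \le \chi_2(n)\log\chi_2(n) + O(\chi_2(n))$ by Stirling. Writing $c=\chi_2(n)$, we have $n \le c\log c\,(1+o(1))$, and since $c\le n$ this forces $c\ge n/\log c\,(1-o(1)) \ge n/\log n\,(1-o(1))$, where in the last step we used $\log c \le \log n$. This directly yields $\chi_2(n)\ge(1-o(1))\,n/\log n$, finishing the proof.
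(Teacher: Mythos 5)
Your proposal is correct and follows essentially the same route as the paper: the contrapositive of \cref{eq:Ramsey_connection} combined with the Greenwood--Gleason bound $R(3;k)\le e\,k!+1$, followed by an asymptotic inversion of $k\mapsto k\log k$. The paper packages the inversion by fixing $\alpha\in(0,1)$ and taking $k=\lfloor \alpha n/\log n\rfloor$, while you invert directly via $2^n < e\,\chi_2(n)!+1$ (note only that this inequality also forces $\chi_2(n)\to\infty$, so the Stirling-type estimate applies); this is a cosmetic difference, not a different argument.
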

This highlights a wide gap: current lower bounds for $R(3;k)$ are exponential in $k$ \cite{Rowley2021,Ageron2021}, while the best upper bounds are super-exponential. 
Any sublinear improvement for $\chi_2(n)$ would push these lower bounds of $R(3;k)$ beyond the exponential regime, which would disprove the conjecture of Erd\H{o}s. 
At a concrete level, the only exact values known are $R(3;1)=3$, $R(3;2)=6$, and $R(3;3)=17$. For $k=4$, the current status is $51 \ \le\ R(3;4) \ \le\ 62$ with the lower bound due to Chung~\cite{Chung1973}, the upper bound due to Fettes, Kramer and Radziszowski~\cite{FettesKramerRadziszowski2004}, improving the earlier $R(3;4)\le 64$ of S\'anchez-Flores~\cite{SanchezFlores1995}; it is conjectured in~\cite{XuRadziszowski2016} that $R(3;4)=51$ (see also the dynamic survey~\cite{RadziszowskiSurvey2017}).
We will see below that these small cases can be used to find good bounds on $\chi_2(n)$.

Since the seminal upper bound of Greenwood–Gleason \cite{GreenwoodGleason1955}, progress in the general upper bounds of $R(3;k)$ has only focused on improving the leading constant in front of $k!$.
The connection to $R(3;k)$ highlights that the chromatic number of (binary) projective spaces is a difficult quantity to pin down, exactly or even asymptotically.
Nevertheless, we hope that the new upper bound of \Cref{thm:main} opens the door to further improvements in the asymptotics of $R(3; k)$. 

\textit{Beyond the binary case}, we also study the chromatic number of projective spaces over general finite fields and allow higher-dimensional forbidden subspaces.
For a prime power $q$ and an integer $t\ge 2$ we write $\chi_q(t;n)$ for the chromatic number of the hypergraph on $\PG(n-1,q)$ whose edges are the sets of points of $(t-1)$-dimensional projective subspaces.
For $t=2$ this is the chromatic number $\chi_q(n)$ of $\PG(n-1,q)$ with respect to lines.
In the general setting, we prove the following recursion in Lemma~\ref{lem:qtrecursion2}.
\[
    \chi_q(t; n) \leq \chi_q(t; n - d) + \chi_q(t; d).
\]
For $t\ge3$, we combine this recursion with a two-coloring of $\PG(2t-1,q)$ coming from elliptic quadrics to get the following bounds.
\begin{theorem}\label{thm:chiq-upper-bound_t}
For every prime power $q$ and integer $t\ge3$,
\[
    \chi_q(t;n) \le \frac{n}{t}+O_t(1).
\]
\end{theorem}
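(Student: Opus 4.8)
The plan is to derive Theorem~\ref{thm:chiq-upper-bound_t} directly from the recursion $\chi_q(t;n) \leq \chi_q(t;n-d) + \chi_q(t;d)$ of Lemma~\ref{lem:qtrecursion2} together with the trivial base case $\chi_q(t;m) = 1$ for $m \leq t-1$ (since a projective space of dimension at most $t-2$ contains no $(t-1)$-dimensional subspace, so a single color suffices). The idea is simply to chop $n$ into blocks of size $t-1$: writing $n = (t-1)\lceil n/(t-1)\rceil - r$ with $0 \leq r < t-1$, we repeatedly apply the recursion with $d = t-1$ until the remaining dimension drops to at most $t-1$.

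First I would set up an induction on $n$. For the base case, if $n \leq t-1$ then $\lceil n/(t-1)\rceil = 1$ and $\chi_q(t;n) = 1$, so the bound holds. For the inductive step, assume $n \geq t$ and that the bound holds for all smaller dimensions. Apply Lemma~\ref{lem:qtrecursion2} with $d = t-1$ (which is legitimate since $1 \leq t-1 < n$), obtaining
\[
\chi_q(t;n) \leq \chi_q(t; n-(t-1)) + \chi_q(t; t-1) = \chi_q(t; n-(t-1)) + 1.
\]
By the induction hypothesis, $\chi_q(t; n-(t-1)) \leq \lceil (n-(t-1))/(t-1)\rceil = \lceil n/(t-1)\rceil - 1$, and adding $1$ gives the claim. (If one prefers a non-inductive phrasing, one can just iterate the recursion $\lceil n/(t-1)\rceil - 1$ times and observe that after each step the dimension decreases by $t-1$, eventually reaching a value in $\{1,\dots,t-1\}$ which contributes the final $+1$.)

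There is essentially no obstacle here; the only thing to be careful about is the ceiling arithmetic, i.e.\ verifying that $\lceil (n-(t-1))/(t-1)\rceil = \lceil n/(t-1)\rceil - 1$ holds for every $n \geq t$, which is immediate since subtracting the exact integer $1$ commutes with the ceiling. One should also note that the recursion as stated requires $1 \leq d < n$ (or $1 \leq d \leq n$ per the statement in the excerpt), and choosing $d = t-1 \leq n-1$ when $n \geq t$ satisfies this. I would keep the write-up to a few lines, presenting it as an immediate corollary of the recursion and the trivial base case rather than as a standalone argument.
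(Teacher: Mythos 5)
Your proposal is correct and is essentially identical to the paper's argument: the paper also obtains the bound by iterating Lemma~\ref{lem:qtrecursion2} with $d=t-1$ together with the trivial base case $\chi_q(t;m)=1$ for $m\le t-1$. Your inductive write-up, including the ceiling arithmetic $\lceil (n-(t-1))/(t-1)\rceil=\lceil n/(t-1)\rceil-1$, is a fine formalization of that iteration.
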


For $t=2$, we use an improved recursion \eqref{eq:recursion} and some known base cases to obtain the following bounds.
\begin{theorem}\label{thm:chiq-upper-bound}
For every fixed prime power $q\ge 7$, 
\[
\chi_q(n)\le \frac{2}{q}n+O_q(1),
\]
for $q=5$, 
\[
\chi_5(n)\le n/3+O(1),
\]
and, for $q=3$ and $q=4$, 
\[
\chi_q(n)\le n/2+O(1).
\]
\end{theorem}

We then show an equivalence between $\chi_q(t;n)$ and multicolor vector space Ramsey numbers $R_q(t;k)$.
These Ramsey numbers were originally introduced in \cite{GrahamLeebRothschild1972} (also see \cite{Spencer1979}) and have recently been studied in \cite{Frederickson2023, Hunter2025}.
Recall that $R_q(t;k)$ is the smallest integer $n$ such that in every $k$-coloring of the $1$-dimensional linear subspaces of $\mathbb{F}_q^n$, there is a monochromatic $t$-dimensional linear subspace.
Equivalently, $R_q(t;k)$ is the smallest integer $n$ such that in every $k$-coloring of the points of $\PG(n-1,q)$ there exists a monochromatic $(t-1)$-dimensional projective subspace.
Therefore, 
\[\chi_q(t; n) \leq k \iff R_q(t; k) > n.\]    

Using our results, we  thus obtain new lower bounds for $R_q(t;k)$.
\begin{theorem}\label{thm:Rqt-lower}
For every prime power $q$, every integer $t\ge3$, and every integer $k\ge1$,
\[
R_q(t;k)>tk-O_{t}(1).
\]
\end{theorem}

Note that for $t=2$, the lower bound $R_q(2;k)>k$ follows from $\chi_q(n)\le n$, and thus the improved upper bounds in Theorem~\ref{thm:chiq-upper-bound} give better lower bounds on $R_q(2;k)$. 
In particular, we obtain
\[
R_2(2;k)>\frac32 k-O(1),\quad
R_q(2;k)>2k-O_q(1)\ (q=3,4),\quad
R_5(2;k)>3k-O(1),
\]
and
\[
R_q(2;k)>\frac q2 k-O_q(1)
\quad\text{for every fixed prime power }q\ge7.
\]
Together with the bound $R_q(t;k)>tk-O_t(1)$ for $t\ge3$, this improves the lower bounds on $R_q(t;k)$ from $\Omega_q(\log k)$ to $\Omega(k)$ for every fixed $q$ and $t$.

We also record an upper bound in the $q=t=2$ case, obtained from the connection with classical multicolor Ramsey numbers for triangles.
\begin{theorem}\label{thm:R22k-upper}
For all integers $k\ge 1$
\[
 R_2(2;k)=O(k\log k).
\]
\end{theorem}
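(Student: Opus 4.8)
The plan is to read the bound directly off the Schur-type connection between proper colorings of $\PG(n-1,2)$ and triangle-free edge-colorings of complete graphs, combined with the classical Greenwood--Gleason upper bound on the multicolor triangle Ramsey number; no new combinatorial construction is needed. By the equivalence $\chi_2(n)\le k\iff R_2(2;k)>n$ recorded above, establishing $R_2(2;k)=O(k\log k)$ is the same as exhibiting an $n=O(k\log k)$ for which $\chi_2(n)>k$. In other words, I need a \emph{lower} bound on $\chi_2$ strong enough to force every $k$-coloring to fail once the dimension reaches order $k\log k$. This is exactly the sign of bound supplied by the Ramsey connection, so the argument is a short unwinding of the implications already set up in the introduction.

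First I would take the contrapositive of the connection \eqref{eq:Ramsey_connection}: since $\chi_2(n)\le k$ implies $R(3;k)>2^n$ (via the difference-coloring of $K_{2^n}$, whose edge $\{u,v\}$ receives the color of $v-u\in\F_2^n\setminus\{0\}$), we get that $R(3;k)\le 2^n$ implies $\chi_2(n)>k$. Next I would invoke the Greenwood--Gleason bound $R(3;k)\le e\,k!+1$. Choosing $n^\ast=\lceil \log_2(e\,k!+1)\rceil$ then gives $2^{n^\ast}\ge e\,k!+1\ge R(3;k)$, so the contrapositive yields $\chi_2(n^\ast)>k$, and hence, by the equivalence, $R_2(2;k)\le n^\ast$.

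Finally I would estimate $n^\ast$ by Stirling's formula: $\log_2(k!)=k\log_2 k-(\log_2 e)\,k+O(\log k)$, so that $n^\ast=\lceil\log_2(e\,k!+1)\rceil=O(k\log k)$, which is the claimed bound. The only external input is the classical inequality $R(3;k)\le e\,k!+1$; everything else is the formal translation between $\chi_2$, $R(3;k)$, and $R_2(2;k)$.

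The hard part here is not a genuine obstacle but careful bookkeeping: I must keep the strict/non-strict Ramsey inequalities aligned (the single off-by-one in passing between $R(3;k)>2^n$ and $R(3;k)\le 2^n$) and absorb the finitely many small values of $k$ into the implied constant of the $O$-notation. As a cross-check, the same estimate can be repackaged through \Cref{prop:chin-lower}: from $\chi_2(n)\ge (1-o(1))\,n/\log n$, setting $n=ck\log k$ with any fixed $c>1$ gives $\log n=(1+o(1))\log k$, hence $n/\log n=(c-o(1))k$ and $\chi_2(n)>k$ for all large $k$; by the equivalence this again gives $R_2(2;k)\le ck\log k$, confirming $R_2(2;k)=O(k\log k)$.
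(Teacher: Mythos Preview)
Your proof is correct and follows essentially the same route as the paper: both pass through the implication $\chi_2(n)\le k\Rightarrow R(3;k)>2^n$ and the Greenwood--Gleason bound $R(3;k)\le e\,k!+1$, then take logarithms. The only cosmetic difference is that the paper sets $n=R_2(2;k)-1$ and bounds it directly, while you choose $n^\ast=\lceil\log_2(e\,k!+1)\rceil$ and apply the contrapositive; the resulting estimates are the same.
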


This improves the upper bound $R_2(2;k)=O(k\log^7 k)$ that follows from the more general bound given by Hunter and Pohoata \cite[Theorem 1.4]{Hunter2025}, which was obtained using the breakthrough result of Kelley and Meka \cite{KelleyMeka2023}.
Note that from the discussion around \cite[Theorem 4.1]{Hunter2025} one can also deduce the upper bound of $O(k \log k)$ on $R_2(2; k)$, but it is not explicitly stated.

Our paper is organized as follows.
In Section~\ref{sec:nonbinary} we introduce the general chromatic numbers $\chi_q(t;n)$, establish the basic recursion, and prove the upper bound for $t\ge3$.
In Section~\ref{sec:recursion} we prove the improved recursion for $t=2$ and derive the upper bounds on $\chi_q(n)$.
In Section~\ref{sec:lower} we develop the connection to classical multicolor Ramsey numbers for triangles and determine exact values of $\chi_2(n)$ for small $n$.
In Section~\ref{sec:vsrn} we establish the link to multicolor vector space Ramsey numbers $R_q(t;k)$ and use our bounds on $\chi_q(t;n)$ to improve the lower bounds for $R_q(t;k)$.
In Section~\ref{sec:conclusion} we discuss future work and related open problems.

\section{General upper bounds on the chromatic number}\label{sec:nonbinary}

We use the convention that all dimensions are projective.
Recall that we obtain a projective space $\PG(n-1,\F)$ from the $n$-dimensional vector space $V$ over the field $\F$ in the following way:
The points, lines, $\ldots$, $(d-1)$-spaces of $\PG(n - 1, \mathbb{F})$ are the $1$-dimensional, $2$-dimensional, $\dots$, $d$-dimensional vector subspaces of $V$.
For $v = (v_1,\dots,v_n) \in \F_q^n \setminus \{0\}$, we write
\[
  [v] = [v_1 : \dots : v_n] \in \PG(n-1,q)
\]
for the point corresponding to the $1$-dimensional subspace $\langle v \rangle \subset \F_q^n$.
Thus, $[v] = [\lambda v]$ for every $\lambda \in \F_q^\times$.

Fix a prime powers $q$ and an integer $t\ge 2$.
For each integer $n\ge 1$, let $\chi_q(t;n)$ denote the smallest $k$ such that the points of $\PG(n-1,q)$ can be colored with $k$ colors in such a way that no $(t-1)$-dimensional projective subspace is monochromatic.
Equivalently, this is the chromatic number of the hypergraph whose vertices are the points of $\PG(n-1,q)$ and whose hyperedges are the sets of points of each $(t-1)$-dimensional projective subspace.

For $q=2$ and $t=2$, a color class is a cap (no three collinear), so a proper coloring is the same as a partition into caps. 
For $q\ge 3$ and $t=2$, a color class must avoid a full $(q+1)$-point line.
In other words, its complement is a blocking set \cite{Beutelspacher1980}. 
We denote $\chi_q(2;n)$ as $\chi_q(n)$.
In particular, partitions into caps give upper bounds on $\chi_q(n)$, but the true value of $\chi_q(n)$ can be smaller.

We can prove the following upper bounds.
Let $S$ be a projective subspace of $\PG(n-1,q)$ of dimension $n-t$.
Then $S\cong\PG(n-t,q)$, so we can color $S$ properly
with $\chi_q(t;n-t+1)$ colors.
Color all points outside $S$ with one new color. Then no $(t-1)$-dimensional projective subspace is monochromatic.
Hence, for all prime powers $q\ge 2$ and for $n \ge t\ge 2$,
\begin{equation}\label{qtrecursion1}
  \chi_q(t;n)\ \le\ \chi_{q}(t;n-t+1)+1.  
\end{equation}

In fact, we can do better. 
We have the following recursion for $\chi_q(t;n)$ that gives an improved upper bound in general. 

\begin{lemma}\label{lem:qtrecursion2}
For all prime powers $q\ge 2$, for all integers $t\ge2$, and for all integers $1\le d\le n-1$,
\begin{equation}
    \chi_q(t; n) \leq \chi_q(t; n - d) + \chi_q(t; d).
\end{equation}
\end{lemma}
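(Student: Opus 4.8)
The plan is to realize $\PG(n-1,q)$ as a union of a hyperplane-like piece and a quotient-like piece, using a fixed direct sum decomposition of the ambient vector space. Concretely, write $\F_q^n = U \oplus W$ with $\dim U = d$ and $\dim W = n-d$. Every point $[v]$ of $\PG(n-1,q)$ corresponds to a one-dimensional subspace $\langle v\rangle$; decompose $v = u + w$ uniquely with $u\in U$, $w\in W$. The natural split is: color the points with $w = 0$ (i.e.\ those lying in the subspace $\PG(U) \cong \PG(d-1,q)$) using a palette realizing $\chi_q(t;d)$, and color all remaining points — those with $w \neq 0$ — according to the image of their $W$-component under the quotient map $\F_q^n \to W$, using a disjoint palette realizing $\chi_q(t;n-d)$. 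Since $[v]$ with $w\neq 0$ maps to the well-defined point $[w] \in \PG(W)\cong\PG(n-d-1,q)$ (the map is constant on $1$-dimensional subspaces because scaling $v$ scales $w$), this is a legitimate coloring, and it uses exactly $\chi_q(t;d) + \chi_q(t;n-d)$ colors.

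The main step is to verify that no $(t-1)$-dimensional projective subspace $T$ is monochromatic. Let $T$ be such a subspace, corresponding to a $t$-dimensional vector subspace $\tilde T \subseteq \F_q^n$. Consider the projection $\pi\colon \F_q^n \to W$ and look at $\pi(\tilde T)$. If $\pi(\tilde T) = 0$, then $\tilde T \subseteq U$, so $T$ is a $(t-1)$-subspace entirely inside $\PG(U)$; since the restriction to $\PG(U)$ is a proper coloring by assumption, $T$ is not monochromatic. Otherwise $\pi(\tilde T)$ has dimension $\ge 1$. Here I would split further: if $\pi|_{\tilde T}$ is injective (equivalently $\tilde T \cap U = 0$), then $\pi(\tilde T)$ is a $t$-dimensional subspace of $W$, and the points of $T$ with $w\neq 0$ receive colors equal to the colors assigned to the points of the $(t-1)$-subspace $\PG(\pi(\tilde T))$ in $\PG(W)$; since that coloring is proper, two of these points get different colors, and both lie in the second palette, so $T$ is non-monochromatic. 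The remaining case is $0 \subsetneq \tilde T\cap U \subsetneq \tilde T$: then $T$ contains at least one point in $\PG(U)$ (colored from the first palette) and at least one point with $w\neq 0$ (colored from the second palette), and since the two palettes are disjoint, $T$ is automatically non-monochromatic.

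I expect the case analysis of the previous paragraph to be the only real obstacle, and it is mild: the one point needing care is the injective-projection case, where one must argue that the color of a point $[v]=[u+w]$ with $w\neq 0$ depends only on $[w]\in\PG(W)$ and matches the color that a proper $\chi_q(t;n-d)$-coloring of $\PG(W)$ assigns to $[w]$ — so that a monochromatic $T$ would pull back to a monochromatic $(t-1)$-subspace of $\PG(W)$, contradicting properness. Everything else is bookkeeping about direct sums and the fact that projective subspaces correspond to vector subspaces. The bound $1\le d\le n-1$ ensures both pieces are genuine (nonempty) projective spaces so that $\chi_q(t;\cdot)$ is defined and at least $1$ on each. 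This establishes $\chi_q(t;n) \le \chi_q(t;n-d) + \chi_q(t;d)$, as claimed.
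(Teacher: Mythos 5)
Your proposal is correct and essentially identical to the paper's proof: the paper also splits the coordinates into two blocks (its $A$ and $B$, your $U\oplus W$, with the roles of $d$ and $n-d$ merely swapped), colors the subspace with one proper coloring and the complement via the projection to the other block with a disjoint palette, and runs the same three-way case analysis on the intersection of the $(t-1)$-space with the kernel subspace, using the same observation that trivial intersection makes the projection injective. Nothing is missing; your injectivity case is exactly the paper's ``$S\cap A=\emptyset$'' case, and the paper's remark even records your quotient-space viewpoint.
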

\begin{proof}
Represent the points of $\PG(n-1,q)$ as $[v_1:\dots:v_n]$.
Define
\[
A \;=\; \bigl\{[v_1:\dots:v_n] : v_{n-d+1}=\dots=v_n=0\bigr\},
\]
\[
B \;=\; \bigl\{[v_1:\dots:v_n] : v_1=\dots=v_{n-d}=0\bigr\}.
\]
Then $A\cong\PG(n-d-1,q)$ and $B\cong\PG(d-1,q)$.
If $v=[v_1:\dots:v_n]  \in \PG(n-1,q)$ is a point, then we write
\[
v=[a:b].
\]
The point $[a:b]$ lies in $A$ if and only if $b=0$.

Let $c_A$ be a proper coloring of $A$ with $\chi_{q}(t;n-d)$ colors, and let $c_B$ be a proper coloring of $B$ with $\chi_q(t;d)$ colors.
Use disjoint color palettes for $c_A$ and $c_B$.
Define a coloring $c$ of $\PG(n-1,q)$ by
\[
c([a:b]) \;=\;
\begin{cases}
c_A([a:0]), & \text{if } b=0,\\[3pt]
c_B([b]),   & \text{if } b\ne 0.
\end{cases}
\]
Thus, $c$ uses $\chi_{q}(t;n-d)+\chi_q(t;d)$ colors.

We show that no $(t-1)$-dimensional projective subspace is monochromatic.
Let $S\subseteq\PG(n-1,q)$ be a $(t-1)$-dimensional projective subspace.
We distinguish three cases: $S\cap A=\emptyset$, $S\subseteq A$, and neither of these holds.

\begin{itemize}
    \item  Suppose that $S\cap A=\emptyset$.
    Then every point of $S$ has the form $[a:b]$ with $b\ne 0$, and hence $c([a:b])=c_B([b])$.
    Consider the projection $\varphi([a:b]) = b$, which maps points of $\PG(n-1,q)$ to points of $\PG(d-1,q)$.
    We show that $\varphi\big{|}_S$ is injective, which would imply that $\varphi(S)$ is a monochromatic $(t-1)$-space in $B$; a contradiction.
    
    Say, $\varphi\big{|}_S$ is not injective, i.e., there exists $[a_1:b_1] \neq [a_2:b_2]$ on $S$ such that $\varphi([a_1:b_1]) = \varphi([a_2:b_2]) $, and hence $b_1 = \lambda b_2$, for some $\lambda \neq 0$.
    Then, the point with coordinates $[a_1:b_1] - \lambda [a_2:b_2] = [a_1 - \lambda a_2:0]$ lies in $S \cap A$, which is a contradiction.
    
    \item  If $S$ contains a point $x=[a_1:b_1]$ with $b_1=0$ and a point $y=[a_2:b_2]$ with $b_2\ne 0$, then $c(x)$ is a color from the palette of $c_A$ and $c(y)$ is a color from the palette of $c_B$.
    Since the palettes are disjoint, $c(x)\neq c(y)$.
    
    \item  If $S\subseteq A$, then the coloring $c$ on $S$ coincides with $c_A$.
    Since $c_A$ is proper for $(t-1)$-spaces in $A$, $S$ is not monochromatic.
    
\end{itemize}

In all cases $S$ is not monochromatic.
Therefore, $c$ is a proper coloring of $\PG(n-1,q)$, and this gives
\[
\chi_q(t;n)\ \le\ \chi_q(t;n-d)+\chi_q(t;d).
\]
\end{proof}

\begin{remark}
    We can give a geometric coordinate-free explanation of the recursion as follows.
    Take an $(n - d - 1)$-dimensional projective subspace $A$ of $\PG(n - 1, q)$, and let $B = PG(n - 1, q)/A \cong PG(d - 1, q)$ be the quotient space with respect to $A$. 
    Color the points of $A$ according to a minimal proper coloring of $A$ and the points outside $A$ according to a minimal proper coloring of $B$, where a point $x$ outside $A$ in $\PG(n - 1, q)$ is identified with the point $\langle x, A \rangle$ of $B$. 
    This gives a proper coloring of the points of $\mathrm{PG}(n - 1, q)$ using $\chi_q(t; n - d) + \chi_q(t; d)$ colors. 
\end{remark}

\begin{remark}
    This upper bound shows that $\chi_q(t; n)$ is subadditive in $n$, and thus $\lim_{n \rightarrow \infty} \chi_q(t; n)/n$ exists. 
\end{remark}

\begin{corollary}\label{cor:chi2-3n4}
For all integers $n\ge 2$,
\[
  \chi_2(n) \le \lceil 3n/4 \rceil.
\]
\end{corollary}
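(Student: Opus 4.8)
The plan is to prove the bound by induction on $n$ with step size $4$, using the recursion of \Cref{lem:qtrecursion2} together with the base value $\chi_2(4)=3$.

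First I would record the base cases $n\in\{1,2,3,4\}$. Here $\chi_2(1)=1$ (a $0$-dimensional projective space has no lines), $\chi_2(2)=2$, $\chi_2(3)=3$, and $\chi_2(4)=3$, all noted in the introduction. In each of these four cases one checks directly that $\chi_2(n)\le\lceil 3n/4\rceil$, since $\lceil 3/4\rceil=1$, $\lceil 6/4\rceil=2$, $\lceil 9/4\rceil=3$ and $\lceil 12/4\rceil=3$.

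Next, for the inductive step I would take $n\ge 5$ and assume the bound for all smaller values at least $1$. Applying \Cref{lem:qtrecursion2} with $t=2$, $q=2$ and $d=4$ gives
\[
\chi_2(n)\ \le\ \chi_2(n-4)+\chi_2(4)\ =\ \chi_2(n-4)+3 .
\]
Since $n-4\ge 1$, the induction hypothesis yields $\chi_2(n-4)\le\lceil 3(n-4)/4\rceil$, so
\[
\chi_2(n)\ \le\ \Bigl\lceil \tfrac{3(n-4)}{4}\Bigr\rceil+3\ =\ \Bigl\lceil \tfrac{3n}{4}-3\Bigr\rceil+3\ =\ \Bigl\lceil \tfrac{3n}{4}\Bigr\rceil ,
\]
where the last equality moves the integer $3$ out of the ceiling. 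This closes the induction and proves \Cref{cor:chi2-3n4}.

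I do not expect any genuine obstacle here: once \Cref{lem:qtrecursion2} is in hand, the recursion does all the work, and the only point needing (minimal) care is the elementary ceiling identity $\lceil 3(n-4)/4\rceil+3=\lceil 3n/4\rceil$ used in the final line. Alternatively, one could avoid ceilings altogether by writing $n=4m+r$ with $r\in\{1,2,3,4\}$, iterating the recursion $m$ times to obtain $\chi_2(n)\le 3m+\chi_2(r)$, and then checking each of the four residues by hand; but the inductive phrasing above is marginally cleaner.
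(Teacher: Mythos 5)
Your proof is correct and follows essentially the same route as the paper: both apply \Cref{lem:qtrecursion2} with $d=4$ and $\chi_2(4)=3$ and then use the small base values $\chi_2(r)$, the only difference being that you phrase the iteration as an induction with the ceiling identity while the paper writes $n=4m+r$ and iterates explicitly.
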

\begin{proof}
Applying \Cref{lem:qtrecursion2} with $q=t=2$, $d=4$ and $\chi_2(4)=3$, we have
\[
\chi_2(n)\le 3+\chi_2(n-4) \qquad (n\ge 5).
\]
Write $n=4m+r$ with $r\in\{1,2,3,4\}$.
Iterating the inequality $m$ times gives
\[
\chi_2(n)\le 3m + \chi_2(r).
\]
Using $\chi_2(r) = r$ for all $r \in \{1, 2, 3\}$ and $\chi_2(4)=3$, we get the stated bound.
\end{proof}
We now prove new upper bounds on $\chi_q(t;n)$ for $t\ge3$ and all $q$. 
\begin{reptheorem}{thm:chiq-upper-bound_t}
For every prime power $q$ and every integer $t\ge3$,
\[
    \chi_q(t;n) \le \frac{n}{t}+O_t(1).
\]
\end{reptheorem}
\begin{proof}
Let $Q$ be an elliptic quadric in $\PG(2t-1,q)$.
It is well-known that $Q$ contains no $(t-1)$-dimensional projective subspace (see for example \cite[Section 4.2]{Ball2015}).

We now show that the complement of $Q$ contains no projective plane.
Let $\pi$ be a projective plane in $\PG(2t-1,q)$.
Then the points of $\pi$ are given by solving $2t-3$ homogeneous equations of degree $1$. 
Therefore, by Chevalley--Warning, the degree $2$ equation corresponding to $Q$ and these $2t - 3$ linear equations, must have a non-zero common solution, since $2 + 2t - 3 = 2t - 1 < 2t$. Therefore, $\pi$ contains a point of $Q$ and thus every projective plane meets $Q$ non-trivially. 
Since $t \geq 3$, this implies that the complement of $Q$ does not contain a $(t - 1)$-dimensional subspace.

Now color the points of $Q$ with one color and the remaining points with a
second color.
By the reasoning above, neither color class contains a
$(t-1)$-dimensional projective subspace.
Thus
\[
    \chi_q(t;2t)\le2.
\]
Applying Lemma~\ref{lem:qtrecursion2} iteratively with this base coloring gives
\[
    \chi_q(t;n)\le \frac{n}{t}+O_t(1).
\]
\end{proof}

\section{Improved recursion for lines}\label{sec:recursion}
In the previous section we proved the recursion
\[
    \chi_q(t;n) \le \chi_q(t;n-d)+\chi_q(t;d),
\]
which works for all $t$.
In this section we improve it in the case $t=2$.
Let $\chi_q(n) \coloneqq \chi_q(2;n)$.

\begin{lemma}\label{lem:recursion}

For every prime power $q$ and every integer $1\le d\le n-1$,
\[
\chi_q(n)\le \chi_q(d)+ \chi_q(n+1-d)-1.
\]
\end{lemma}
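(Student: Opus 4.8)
The plan is to construct the required coloring of $\PG(n-1,2)$ by gluing a minimal coloring of a $\PG(d-1,2)$ to a minimal coloring of a $\PG(n-d,2)$ along a single ``reserved'' color, using a one-bit ``lift'' to keep the glued coloring proper. Write $\F_2^n=\F_2^{d-1}\oplus\langle z\rangle\oplus\F_2^{n-d}$ for a fixed $z\ne 0$, and decompose each $x\in\F_2^n$ accordingly as $x=\zeta(x)z+u(x)+w(x)$ with $\zeta(x)\in\F_2$, $u(x)\in\F_2^{d-1}$, $w(x)\in\F_2^{n-d}$. Set $U=\F_2^{d-1}\oplus\langle z\rangle$ and $W=\langle z\rangle\oplus\F_2^{n-d}$, so that $U\cap W=\langle z\rangle$, $U+W=\F_2^n$, $\PG(U)\cong\PG(d-1,2)$ and $\PG(W)\cong\PG(n-d,2)$. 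Fix minimal proper colorings $c_U$ of $\PG(U)$ and $c_W$ of $\PG(W)$, and relabel their colors so that $c_U(z)=c_W(z)=:\star$ and the two palettes agree only in $\star$; together they then use $\chi_2(d)+\chi_2(n+1-d)-1$ colors.

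Next I would define $c$ on $\PG(n-1,2)$ by: $c(x)=c_U(x)$ if $x\in U$; $c(x)=c_W(x)$ if $x\in W\setminus U$; and, if $x\notin U\cup W$, put $x_W:=\zeta(x)z+w(x)\in W\setminus\{0\}$ and set $c(x)=c_W(x_W)$ unless $c_W(x_W)=\star$, in which case set $c(x)=c_U\!\big(u(x)+z\big)$ (here $u(x)\ne 0$, so $u(x)+z$ is a point of $\PG(U)$). This last clause is the lift. The two clauses agree on the single point $z\in U\cap W$ because $c_U(z)=c_W(z)=\star$, and every value of $c$ is a value of $c_U$ or of $c_W$, so $c$ uses at most $\chi_2(d)+\chi_2(n+1-d)-1$ colors.

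It then remains to check that no line $\ell=\{x,y,x+y\}$ is monochromatic, which I would do by cases on the positions of the three points relative to $U$ and $W$. If $\ell\subseteq U$ or $\ell\subseteq W$, then $c$ agrees with $c_U$ or $c_W$ on a genuine line of $\PG(U)$ or $\PG(W)$ and we are done. If $\ell$ uses a non-reserved color of the $c_U$-palette together with a non-reserved color of the $c_W$-palette, these are distinct since the palettes meet only in $\star$. The only remaining possibility is that $\ell$ is monochromatic in the color $\star$; here, unwinding the definitions---and using crucially that $\star=c_U(z)=c_W(z)$---shows that such an $\ell$ would force a $\star$-monochromatic line of $\PG(U)$ under $c_U$ or of $\PG(W)$ under $c_W$, contradicting properness. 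Summing over the cases proves the recursion.

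I expect the main difficulty to be pinning down the lift clause and dispatching the $\star$-monochromatic case. The naive rule ``$c(x)=c_U(u(x))$'' (project $x$ into $U$ and recolor) is wrong: it forces $c(x)=c(u(x))$, so a line $\{u,w,u+w\}$ with $u\in U$ and $w\in W$ both colored $\star$ becomes monochromatic. Replacing $u(x)$ by $u(x)+z$ cures precisely this, since $u(x),\,z,\,u(x)+z$ is a line of $\PG(U)$, so $c_U(u(x))=c_U(z)=\star$ forces $c_U(u(x)+z)\ne\star$; the bulk of the verification is confirming that this shift does not reintroduce a $\star$-monochromatic line in the other sub-configurations---notably those in which the third point of $\ell$ equals $z$, or lies in $U$, or lies in $W$.
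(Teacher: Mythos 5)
Your construction is sound, and it does produce a proper coloring with $\chi_2(d)+\chi_2(n+1-d)-1$ colors; it is the same mechanism as the paper's proof (a reserved color plus a one-bit lift), implemented from the other side. The paper colors $U\cong\PG(d-1,2)$, declares one color $r$ that is then \emph{never used}, and redirects every point whose $U$-shadow is zero or $r$-colored into a fresh coloring of an extended space $A^+\cong\PG(n-d,2)$, an explicit extra coordinate $t$ recording whether the shadow was $r$-colored. You instead take two overlapping subspaces $U$ and $W$ meeting in $\langle z\rangle$, let the two palettes share exactly the color $\star=c_U(z)=c_W(z)$, color by the $W$-shadow by default, and redirect points with $\star$-colored shadow into $U$ via $u(x)\mapsto u(x)+z$; your $z$-coordinate plays the role of the paper's bit $t$. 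Both devices save exactly one color, and your key observation---that $c_U(u)=\star$ forces $c_U(u+z)\neq\star$ because $\{u,z,u+z\}$ is a line of $\PG(U)$---is exactly what makes the gluing work.

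The verification as you sketch it, however, has a concrete gap: after disposing of $\ell\subseteq U$, $\ell\subseteq W$, and lines carrying non-$\star$ colors from both palettes, it is \emph{not} true that ``the only remaining possibility is that $\ell$ is monochromatic in $\star$.'' Because redirected points reuse ordinary colors of $c_U$, a line could a priori be monochromatic in a color $\alpha\neq\star$ of the $c_U$-palette while having one point in $U$ and two redirected points outside $U\cup W$ (or all three points redirected); and a line could be monochromatic in a color $\beta\neq\star$ of the $c_W$-palette without lying in $W$ (one point of $W\setminus U$ plus two outside points, or three outside points). These cases do yield to the two tools you already use, but they must be run: for the $\beta$-case, the shadows $\zeta(x)z+w(x)$ of the three points are nonzero, sum to zero, and are pairwise distinct (two equal shadows would force the third point into $U$, which carries only $c_U$-colors), so they form a $\beta$-monochromatic line of $\PG(W)$, a contradiction; for the $\alpha$-case with $x_1\in U$ and $x_2,x_3$ redirected, either $\zeta_2\neq\zeta_3$, and then the two shadows together with $z$ form a $\star$-monochromatic line of $\PG(W)$, or $\zeta_2=\zeta_3$, and then $\{u_2+z,\,u_3+z,\,u_1\}$ is an $\alpha$-monochromatic line of $\PG(U)$; with all three points redirected, the three shadows already give a $\star$-monochromatic line of $\PG(W)$. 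So the coloring is correct, but the properness proof must cover these non-$\star$ configurations rather than reducing everything to the $\star$-monochromatic case.
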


\begin{proof}
Let $n-1\ge d\ge 1$.
Represent the points of $\PG(n-1,q)$ as $[v_1:\dots:v_n]$.
Define
\[
A \;=\; \bigl\{[v_1:\dots:v_n] : v_{n-d+1}=\dots=v_n=0\bigr\},
\]
\[
U \;=\; \bigl\{[v_1:\dots:v_n] : v_1=\dots=v_{n-d}=0\bigr\}.
\]
Then $A\cong\PG(n-d-1,q)$ and $U\cong\PG(d-1,q)$.
Every nonzero vector $v\in \mathbb F_q^n$ can be written uniquely as $v=(a,u)$, where $a\in \mathbb F_q^{n-d}$ and $u\in \mathbb F_q^d$. We write the corresponding projective point as $[a:u]$.
If $a$ is nonzero, then $[a:0]\in A$, and if $u$ is nonzero, then $[0:u]\in U$.

Take a proper $\chi_q(d)$-coloring $c_U$ of $U\cong\PG(d-1,q)$ and fix a reserved color $r$ of $c_U$.
Let $V_U$ be a set of vectors constructed from $U$ by fixing a particular coordinate vector for each point in $U$, say by taking the vector whose first non-zero coordinate is $1$.
Then for each vector $u \in \mathbb{F}_q^{d} \setminus \{0\}$, there exists a unique scalar $\lambda$ such that $u = \lambda v$ for some $v \in V_U$.
Define a function
\[
t:\mathbb{F}_q^{d} \longrightarrow \mathbb{F}_q
\]
by
\[
t(u) \;=\;
\begin{cases}
0, & u=0,\\
0, & u\neq 0 \text{ and } c_U([u])\neq r.\\
\lambda, & u = \lambda v  \text{ for some } v \in V_U \text{ and } c_U([u])=r,
\end{cases}
\]
By construction, for every $\lambda\in\mathbb F_q$ and every $u\in\mathbb F_q^d$, we have $t(\lambda u)=\lambda t(u)$.
Indeed, this is clear if $u=0$ or if $c_U([u])\neq r$. If $c_U([u])=r$ and $u=\mu v$ with $v\in V_U$, then $\lambda u=(\lambda\mu)v$, so $t(\lambda u)=\lambda\mu=\lambda t(u)$.

Now extend the underlying vector space of $A$, $\mathbb F_q^{n-d}$ to $\mathbb F_q^{n-d+1}$, and let $A^+\cong\PG(n-d,q)$ be the corresponding projective space.
Take a proper $\chi_q(n+1-d)$-coloring $c_{A^+}$ of $A^+$, using a palette disjoint from the colors of $c_U$.
Define
\[
c([a:u]) \;=\;
\begin{cases}
c_U([u]), & \text{if } u\neq 0 \text{ and } c_U([u])\neq r,\\[3pt]
c_{A^+}\bigl([a:t(u)]\bigr), & \text{otherwise}
\end{cases}
\]
Thus, $r$ is never used, and the total number of colors is at most $\chi_q(d)-1+\chi_q(n+1-d)$.
Moreover, this function is well-defined since $t(\lambda u) = \lambda t(u)$ which ensures that $[a:t(u)]$ is independent of the choice of representative for $[a:u]$.

We show that $c$ is a proper coloring.
Assume a line $\ell\subset\PG(n-1,q)$ is monochromatic in color $\alpha$.
Let $V_\ell\le \mathbb F_q^n$ be the $2$-dimensional vector subspace corresponding to the projective line $\ell$.
Define the linear projection $\varphi:\mathbb F_q^n=\mathbb F_q^{n-d}\oplus \mathbb F_q^d\longrightarrow \mathbb F_q^d$
by $\varphi(a,u)=u.$

\emph{Case 1: $\alpha$ is a color of $c_U$.}
By the definition of $c$, every point $[a:u]\in \ell$ satisfies $u\neq 0$ and $c_U([u])=\alpha\neq r.$
In particular, no nonzero vector of $V_\ell$ has $U$-component equal to $0$. Equivalently, $\ker(\varphi|_{V_\ell})=\{0\}.$
Thus $\varphi|_{V_\ell}$ is injective and its image is a $2$-dimensional vector subspace, which corresponds to a line of $U$. 
Every point of this line must have color $\alpha$ under $c_U$.
This gives a monochromatic projective line in $U$ for the coloring $c_U$, contradicting the fact that $c_U$ is proper.

\emph{Case 2: $\alpha$ is a color of $c_{A^+}$.}
Then for every $x=[a:u]\in\ell$ we have $u=0$ or $c_U([u])=r$, 
and 
$c(x)=c_{A^+}([a:t(u)])=\alpha$.
Again consider the projection map $\varphi$. 
If its image is a line of $U$, then it must be an $r$-colored line of $U$, contradicting the fact that $c_U$ is a proper coloring. 
If the projection is just the zero vector, then $t(u) = 0$ and we get a monochromatic line in $A^+$, which is again a contradiction. 
Therefore, the projection must be a single point of $U$, that is, a $1$-dimensional vector subspace.
Define $T: V_\ell \rightarrow \mathbb{F}_q^{n - d + 1}$ by $T(a, u) = (a, t(u))$.
We show that this map is a linear injective map and hence its image is also a $2$-dimensional vector space, thus giving us a monochromatic line of $A^+$, a contradiction.

Let $(a_1,u_1)$ and $(a_2,u_2)$ be two vectors in $V_\ell$. 
Since $\varphi(V_\ell)$ is a single $1$-dimensional vector subspace, there exists some $w\in V_U$ such that every $u\in \varphi(V_\ell)$ can be written uniquely as $u=sw$ for some $s\in\mathbb F_q$.
Thus $u_1=s_1w$ and $u_2=s_2w$ for some $s_1,s_2\in\mathbb F_q$.

Since every nonzero $u\in \varphi(V_\ell)$ corresponds to a point colored by the second rule, the point $[w]$ has color $r$ under $c_U$. Therefore, by the definition of $t$, if $u_i=s_iw$, then $t(u_i)=s_i$.
Say $T(a_1,u_1)=T(a_2,u_2)$. Then $a_1=a_2$ and $t(u_1)=t(u_2)$.
Since $t(u_1)=s_1$ and $t(u_2)=s_2$, this implies $s_1=s_2$, and hence $u_1=u_2$. Therefore $(a_1,u_1)=(a_2,u_2)$, showing that $T$ is injective.

We now show linearity.
Let $\lambda_1,\lambda_2\in\mathbb F_q$. Since $u_1=s_1w$ and $u_2=s_2w$, we have $\lambda_1u_1+\lambda_2u_2=(\lambda_1s_1+\lambda_2s_2)w$. Hence, by the definition of $t$, we get $t(\lambda_1u_1+\lambda_2u_2)=\lambda_1s_1+\lambda_2s_2=\lambda_1t(u_1)+\lambda_2t(u_2)$.
Therefore,
\[
T(\lambda_1(a_1,u_1)+\lambda_2(a_2,u_2))
=
(\lambda_1a_1+\lambda_2a_2,t(\lambda_1u_1+\lambda_2u_2))
=
\lambda_1T(a_1,u_1)+\lambda_2T(a_2,u_2).
\]
Thus $T$ is linear.

Since $T$ is linear and injective, $T(V_\ell)$ is a $2$-dimensional vector subspace of $\mathbb F_q^{n-d+1}$. Hence $T(V_\ell)$ corresponds to a projective line in $A^+$.
For every nonzero $(a,u)\in V_\ell$, the point $[a:u]$ lies on $\ell$, and by assumption it has color $\alpha$. Therefore $c_{A^+}([a:t(u)])=\alpha$, so every point of this line has color $\alpha$ under $c_{A^+}$.
This contradicts the properness of $c_{A^+}$.

\vspace{2ex}
\noindent
Both \textit{Case 1} and \textit{Case 2} lead to a contradiction, implying that $c$ must be a proper coloring of $\mathrm{PG}(n - 1, q)$, and thus
\[
\chi_q(n)\;\le\; (\chi_q(d)-1)+\chi_q(n+1-d)\;=\;\chi_q(d)+\chi_q(n+1-d)-1. \qedhere
\]
\end{proof}

\begin{figure}[!t]
  \centering
  \begin{tikzpicture}[font=\small,scale=0.75, every node/.style={scale=0.75}]
      \def\W{4.2}\def\H{9.0}\def\dH{3.3}
      \fill[oiBlue!22]  (0,\dH) rectangle (\W,\H);  
      \fill[oiGreen!22] (0,0)   rectangle (\W,\dH); 
      \draw[thick] (0,0) rectangle (\W,\H);
      \draw[thick] (0,\dH) -- (\W,\dH);
      \node[above] at ({0.5*\W},\H) {$\PG(n-1,q)$};
      \draw[decorate,decoration={brace,amplitude=5pt,mirror}]
        (\W+0.35,\dH) -- (\W+0.35,\H)
        node[midway,xshift=25pt] {$n-d$};
      \draw[decorate,decoration={brace,amplitude=5pt,mirror}]
        (\W+0.35,0) -- (\W+0.35,\dH)
        node[midway,xshift=15pt] {$d$};
      \node[anchor=east] at (-0.3,{\H-0.3}) {$v_1$};
      \node[anchor=east] at (-0.3,{\H-1.0}) {$\vdots$};
      \node[anchor=east] at (-0.3,{\dH+0.3}) {$v_{\,n-d}$};
      \node[anchor=east] at (-0.3,{\dH-0.3}) {$v_{\,n-d+1}$};
      \node[anchor=east] at (-0.3,0.9) {$\vdots$};
      \node[anchor=east] at (-0.3,0.3) {$v_n$};
      \node at ({0.5*\W},{0.5*(\H+\dH)}) {$A \cong \PG(n-d-1,q)$};
      \node at ({0.5*\W},{0.5*\dH}) {$U \cong \PG(d-1,q)$};
  \end{tikzpicture}
  \caption{Coordinate slice of $\PG(n-1,q)$: $A$ (first $n\!-\!d$ coordinates) and $U$ (last $d$)}
  \label{fig:fig1}
\end{figure}
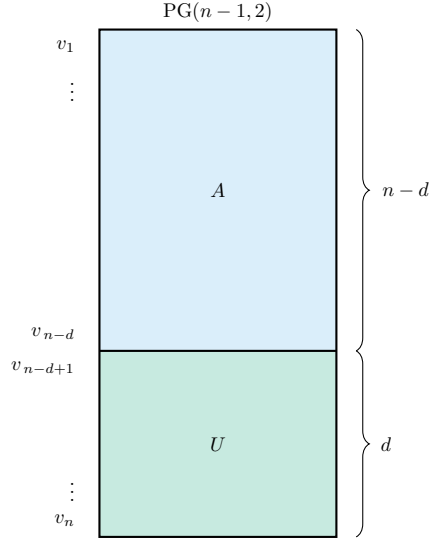

\begin{figure}[!t]
  \centering
  \begin{tikzpicture}[font=\small,scale=0.75, every node/.style={scale=0.75}]
    \def\W{4.2}\def\H{9.0}\def\dH{3.3}
    \def\Gap{1.0}      
    \def\Extra{0.6}    
    \def\RSx{0.7}      

    \pgfmathsetmacro{\HB}{\dH}                    
    \pgfmathsetmacro{\HtopBase}{\H - \dH}
    \pgfmathsetmacro{\HT}{\HtopBase + \Extra}     
    \pgfmathsetmacro{\tY}{\HB+\Gap+\Extra/2}      

    \fill[oiGreen!20] (0,0) rectangle (\W,\HB);
    \fill[oiRed!50]   (\W-\RSx,0) rectangle (\W,\HB);

    \fill[oiYellow!45] (0,{\HB+\Gap}) rectangle (\W,{\HB+\Gap+\Extra});               
    \fill[oiBlue!20]   (0,{\HB+\Gap+\Extra}) rectangle (\W,{\HB+\Gap+\HT});           

    \draw[thick] (0,0) rectangle (\W,\HB);
    \node[anchor=south west] at (0.7,\HB-2) {$U \cong \PG(d-1,q)$};

    \draw[thick] (0,{\HB+\Gap}) rectangle (\W,{\HB+\Gap+\HT});
    \node[anchor=south west] at (0.75,{\HB+\Gap+\HT-2}) {$A^+ \cong \PG(n-d,q)$};

    \node at ({\W-\RSx/2+1.75},{\HB/2}) {reserved color $r$};

    \draw[decorate,decoration={brace,amplitude=6pt,mirror,raise=2pt}]
      ({\W-\RSx},\HB)--(0,\HB) 
      node[midway,yshift=15pt] {uses $\chi_q(d)-1$ colors};

    \draw[decorate,decoration={brace,amplitude=6pt,mirror,raise=2pt}]
      (\W,{\HB+\Gap+\HT})--(0,{\HB+\Gap+\HT})
      node[midway,yshift=18pt] {uses $\chi_q(n+1-d)$ colors};

    \node[anchor=west,align=left] (tcase) at ({\W+1.0},{\tY})
      {$\begin{cases}
         t=0 & \text{if $u=0$},\\
         t=0 & \text{$c_U([u])\neq r$} \\
         t= \lambda & \text{$u=\lambda v$, $v\in V_U$, and $c_U([u])=r$}
       \end{cases}$};

    \draw[-{Stealth[length=2mm]}] ([xshift=-4pt]tcase.west) -- (\W,{\tY});
    
  \end{tikzpicture}
  \caption{Reserved color $r$ in $U$ (vertical strip) and routing to $A^+$ via the extra coordinate $t$.}
  \label{fig:fig2}
\end{figure}
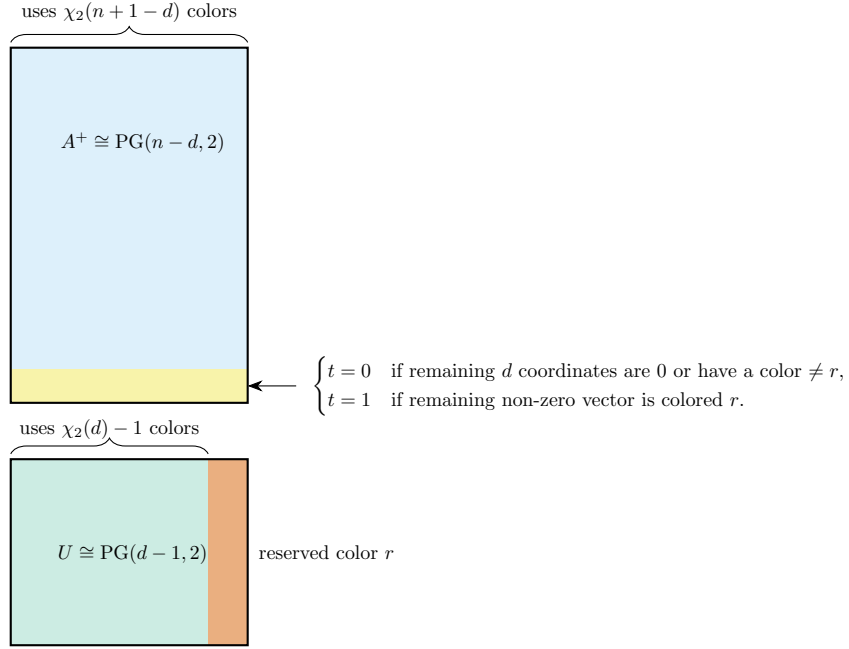

\begin{reptheorem}{thm:main}
For all $n \geq 2$
\[
  \chi_2(n)\le \lfloor 2n/3 \rfloor + 1 .
\]
\end{reptheorem}

\begin{proof}
Applying \Cref{lem:recursion} with $q=2$, $d=4$ and $\chi_2(4)=3$, we have
\[
\chi_2(n)\le 2+\chi_2(n-3) \qquad (n\ge 4).
\]
Write $n=3m+r$ with $r\in\{1,2,3\}$.
Iterating the inequality $m$ times gives
\[
\chi_2(n)\le 2m + \chi_2(r).
\]
Using $\chi_2(r) = r$ for all $r \in \{1, 2, 3\}$, we get the stated bound.
\end{proof}

We now prove the upper bounds for $q\ge 3$.

\begin{reptheorem}{thm:chiq-upper-bound}
For every fixed prime power $q\ge 7$, 
\[
\chi_q(n)\le \frac{2}{q}n+O_q(1).
\]
For $q=5$, 
\[
\chi_5(n)\le n/3+O(1).
\]
For $q=3$ and $q=4$, 
\[
\chi_q(n)\le n/2+O(1).
\]
\end{reptheorem}

\begin{proof}
F\"uhrer and Taranchuk \cite{FuhrerTaranchuk2024} proved that, for every prime power $q$,
\[
\chi_q\!\left(\binom{q}{2}+1\right)\le q.
\]
Applying Lemma~\ref{lem:recursion} with $d=\binom{q}{2}+1$ and iterating gives
\[
\chi_q(n)\le \frac{2}{q}n+O_q(1).
\]
For $q\ge 7$, this gives the first result.

A non-trivial blocking set, which is a set of points that has nonempty intersection with every line of $\PG(n-1,q)$ while not containing any line, exists in $\PG(3,5)$ \cite{Rajola1988}.
Color the points of this blocking set with one color and the remaining points with another color, and this gives a proper $2$-coloring.
So $\chi_5(4)\le 2$. Since $\chi_5(4)\ge 2$, we have $\chi_5(4)=2$.
Applying Lemma~\ref{lem:recursion} with $d=4$ and iterating gives the second result.

For $q=3$ and $q=4$, a non-trivial blocking set exists in $\PG(2,q)$ (in fact it exists for every $q>2$) \cite{Richardson1956}. 
Coloring the points of this blocking set with one color and the remaining points with another color gives a proper $2$-coloring, showing that $\chi_q(3)\le 2$ for $q=3$ and $q=4$.
Since $\chi_q(2)=2$ and $\chi_q(3)\ge \chi_q(2)$, we have $\chi_q(3)=2$ for $q=3$ and $q=4$.
Applying Lemma~\ref{lem:recursion} with $d=3$ and iterating gives the third result.
\end{proof}

\begin{remark}
Our computations \cite{Bishnoi2025} produced $3$-colorings of $\PG(4,3)$ and $\PG(4,4)$.
Thus $\chi_3(5)\le 3$ and $\chi_4(5)\le 3$.
Together with the non-existence of non-trivial blocking sets in $\PG(3,3)$ and $\PG(3,4)$
\cite{Tallini1988,Cassetta1998}, this gives $
\chi_3(4)=\chi_3(5)=\chi_4(4)=\chi_4(5)=3$. 
Applying Lemma~\ref{lem:recursion} with $d=5$ also gives $\chi_q(n)\le n/2+O(1)$ for $q=3$ and $q=4$, so these computed base cases do not improve the bounds above.
\end{remark}

\section{Connection to classical Ramsey theory}\label{sec:lower}
Here we prove the lower bound on $\chi_2(n)$ given in \Cref{prop:chin-lower}.
The same proof works for any finite abelian group $G$ after fixing a strict linear order on $G$: for each unordered pair $\{u,v\}$ with $u<v$, color the edge by the color of $v-u$.
If the vertex-coloring of $G$ has no monochromatic solution to $x+y=z$ (i.e., each color class is sum-free), then the edge-coloring of the complete graph $K_{|G|}$ has no monochromatic triangles. This argument goes back to Abbott and Hanson~\cite{AbbottHanson1972} who generalized Schur's idea~\cite{Schur1917}. For completeness, we make the connection with $\chi_2(n)$ precise and provide a detailed proof. All logarithms in the statement and the proof have base two.

\begin{repproposition}{prop:chin-lower}
 $\chi_2(n)\ge(1 - o(1))\bigl(n/\log n\bigr)$.   
\end{repproposition}

\begin{proof}
We work in $\F_2^n$. 
Assume $\chi_2(n)\le k$. Then there is a coloring $c:\F_2^n\setminus\{0\}\to[k]$ such that no triple $\{x,y,x+y\}$ is monochromatic.
(Over $\F_2$ these three elements are distinct, since $x=y$ would give $x+y=0$.)

Use $c$ to color the edges of $K_{2^n}$ on vertex set $\F_2^n$ as follows:
for every edge $\{u,v\}$, color by $c(v-u)$.
This is well-defined since $v-u\neq 0$.
If $u,v,w$ formed a monochromatic triangle, then
\[
c(v-u)=c(w-v)=c(w-u),
\]
but $(v-u)+(w-v)=w-u$ in $\F_2^n$, which contradicts the assumption that $c$ has no monochromatic triple. 
Hence, this edge-coloring of $K_{2^n}$ has no monochromatic triangle, so
\begin{equation}\label{eq:Ramsey-lb}
R(3;k)\ >\ 2^n.
\end{equation}

We take the contrapositive.
If $R(3;k)\le 2^n$, then $\chi_2(n)>k$.
Fix $\alpha\in(0,1)$ and set $k=\big\lfloor \alpha\,n/\log n\big\rfloor$.
Recall that the classical upper bound of Schur and all subsequent improvements are of the form $R(3;k)=O(k!)$. Therefore
\[
R(3;k) = O(k!) =\ O(k^k) =\ O(2^{k\log k}) =\ O\left(2^{ \tfrac{\alpha\,n}{\log n} \log(\alpha\,n)}\right) =\ O(2^{\alpha n}) .
\]
For all sufficiently large $n$, it follows that $R(3;k) \le 2^n$ and hence $\chi_2(n)>k = \lfloor \alpha\,n/\log n \rfloor$. 
Since $\alpha\in(0,1)$ was arbitrary, $\chi_2(n)\ge(1 - o(1))(n/\log n)$.
\end{proof}

\subsection{Small values}\label{subsec:small-n}
We will use the known values and bounds~\cite{RadziszowskiSurvey2017} for the multicolor Ramsey numbers for triangles to obtain lower bounds on $\chi_2(n)$ for small $n$.
\[
R(3;1)=3,\qquad R(3;2)=6,\qquad R(3;3)=17,
\]
\[
51 \le R(3;4) \le 62,\qquad
162 \le R(3;5) \le 307 \ ,\qquad
538 \le R(3;6) \le 1838.
\]
These bounds give the following values of $\chi_2(n)$:
\begin{itemize}
  \item $n=5$: Since $R(3;3)=17\le 2^5$, \cref{eq:Ramsey-lb} gives $\chi_2(5)>3$. By \Cref{thm:main}, $\chi_2(5)\le 4$. Hence $\chi_2(5)=4$.
  \item $n=6,7$: Since $R(3;4)\leq 62< 2^6$, we get $\chi_2(7)\geq \chi_2(6)>4$. By \Cref{thm:main}, $\chi_2(6) = \chi_2(7) = 5$.
\end{itemize}

\begin{corollary}
$\chi_2(7)=5$.
\end{corollary}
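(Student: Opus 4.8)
The plan is to sandwich $\chi_2(7)$ between $5$ and $5$ by assembling the two halves of the machinery already developed: the upper bound comes from Theorem~\ref{thm:main}, and the lower bound from the Ramsey connection together with the known value of a small multicolour Ramsey number.

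For the upper bound I would simply instantiate Theorem~\ref{thm:main} at $n=7$: since $\lfloor 2\cdot 7/3\rfloor+1=\lfloor 14/3\rfloor+1=4+1=5$, we get $\chi_2(7)\le 5$. Unwound, this is one step of the recursion of Lemma~\ref{lem:recursion} with $d=4$, namely $\chi_2(7)\le 2+\chi_2(4)=2+3=5$, so no further work is needed here.

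For the lower bound I would use the contrapositive of \eqref{eq:Ramsey-lb} (equivalently \eqref{eq:Ramsey_connection}): if $R(3;k)\le 2^n$ then $\chi_2(n)>k$. Taking $k=4$ and $n=6$, the bound $R(3;4)\le 62$ of Fettes–Kramer–Radziszowski gives $R(3;4)\le 62<64=2^6$, hence $\chi_2(6)\ge 5$. Then I would invoke monotonicity of $\chi_2$ — restricting a proper colouring of $\PG(6,2)$ to a hyperplane $\PG(5,2)$ is still proper, since every line of the hyperplane is a line of $\PG(6,2)$ — to conclude $\chi_2(7)\ge\chi_2(6)\ge 5$. (Alternatively one may skip monotonicity and argue directly: $62<128=2^7$ already forces $\chi_2(7)>4$.) Combining the two estimates yields $5\le\chi_2(7)\le 5$, i.e. $\chi_2(7)=5$.

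The corollary itself is a three-line assembly, so the ``main obstacle'' is not in proving it but in having both inputs in hand. On the upper-bound side the real effort sits in Lemma~\ref{lem:recursion} (the reserved-colour plus one-bit-lift construction) that powers Theorem~\ref{thm:main}; on the lower-bound side the crucial external ingredient is the computational bound $R(3;4)\le 62$, and it is precisely the tightness $62<64$ that just barely forces $\chi_2(6)\ge 5$, and hence $\chi_2(7)\ge 5$. Given these two facts, the statement follows immediately.
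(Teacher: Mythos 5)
Your proposal is correct and follows essentially the same route as the paper: the paper also combines the upper bound $\chi_2(7)\le\lfloor 14/3\rfloor+1=5$ from Theorem~\ref{thm:main} with the lower bound obtained from $R(3;4)\le 62<2^6$ via the Ramsey connection~\eqref{eq:Ramsey_connection}, using monotonicity $\chi_2(7)\ge\chi_2(6)>4$. Your alternative remark that $62<2^7$ already gives $\chi_2(7)>4$ directly is also valid.
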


Thus $n=7$ is the first case $n\ge 4$ where $\chi_2(n)\ne n-1$.
These small values highlight the usefulness of the Ramsey connection and also show that we recover known results for $n=5$ \cite{Rosa1970-2} and $n=6$ \cite{Fugere1994}.

We record the current small cases and the best bounds that we have obtained.
\[
\begin{array}{c|cccccccccccc}
 n & 2 & 3 & 4 & 5 & 6 & 7 & 8 & 9 & 10 & 11 & 12 & 13 \\ \hline
 \chi_2(n) & 2 & 3 & 3 & 4 & 5 & 5 & [\,5,6\,] & [\,6,7\,] & [\,6,7\,] & [\,7,8\,] & [\,7,9\,] & [\,7,9\,]
\end{array}
\]

\section{Connection to vector space Ramsey numbers}\label{sec:vsrn}

We now recall the multicolor vector space Ramsey numbers.
Let $\mathcal{L}_1(V)$ denote the set of $1$-dimensional subspaces of a finite-dimensional vector space $V$ over $\F_q$.
For integers $t_1,\dots,t_k\ge 1$, the number $R_q(t_1,\dots,t_k)$ is defined to be the smallest $n$ such that for every coloring $f:\mathcal{L}_1(\F_q^n)\to[k]$
there exist an index $i\in[k]$ and a $t_i$-dimensional subspace $U\le \F_q^n$ such that all $1$-dimensional subspaces of $U$ receive color $i$.

In the case $t_1=\dots=t_k=t$ we write $R_q(t;k):=R_q(t,\dots,t)$.
Thus, the existence of a monochromatic $(t-1)$-space in $\PG(n-1,q)$ is the same as the existence of a $t$-dimensional subspace $U \le \F_q^n$ whose $1$-dimensional subspaces are all the same color.
This gives an equivalence between $\chi_q(t;n)$ and the vector space Ramsey numbers $R_q(t;k)$.
Let $q$ be a prime power, $t\ge 2$, $n\ge t$, and $k\ge 1$.
Then
\begin{equation}\label{eq:chi-vs-Rq}
    \chi_q(t;n)\le k \quad\Longleftrightarrow\quad  R_q(t;k)>n.
\end{equation}
Equivalently,
\[
R_q(t; k) = \min \{n : \chi_q(t; n) > k\}.
\]

Frederickson and Yepremyan~\cite{Frederickson2023} note that standard applications of the Lov\'asz Local Lemma give lower bounds for $R_2(t;k)$ of order $\Omega(2^t \log k / t)$.
The same argument over $\F_q$ shows that, for all $q$ and for fixed $t$,
\[
R_q(t;k)>C_t\frac{q^{t-1}}{\log q}\log k.
\]

Combining this probabilistic lower bound with the recursion in Lemma~\ref{lem:qtrecursion2} gives an improved lower bound which is linear in $k$. 

\begin{corollary}\label{cor:LLL-boost}
For every prime power $q$, every integer $t\ge2$ and for sufficiently large $k$,
\[
R_q(t;k)>C_t\frac{q^{t-1}}{\log q}\,k
\]
for some constant $C_t>0$.
\end{corollary}
\begin{proof}
Fix an integer $k_0\ge2$. 
The Lov\'asz Local Lemma bound gives $R_q(t;k_0)>c_t q^{t-1}/\log q$ for some constant $c_t>0$.
Equivalently, there exists an integer $N\ge c_t q^{t-1}/\log q$ such that $\chi_q(t;N)\le k_0$.
By iterating Lemma~\ref{lem:qtrecursion2}, we get $\chi_q(t;jN)\le jk_0$ for every $j\ge1$.
Taking $j=\lfloor k/k_0\rfloor$ and using \eqref{eq:chi-vs-Rq}, we obtain $
R_q(t;k)>\lfloor k/k_0\rfloor\cdot N$.
\end{proof}

For $t\ge3$, we also get the following lower bounds for $R_q(t;k)$.

\begin{reptheorem}{thm:Rqt-lower}
For every prime power $q$, every integer $t\ge3$, and every integer $k\ge1$,
\[
R_q(t;k)>kt-O_t(1).
\]
\end{reptheorem}
\begin{proof}
    Use Theorem~\ref{thm:chiq-upper-bound_t} and equivalence \eqref{eq:chi-vs-Rq}.
\end{proof}
\begin{remark}
The bounds in \Cref{cor:LLL-boost} and \Cref{thm:Rqt-lower} are useful in different regimes.
For fixed $q$ and $t$, both are linear in $k$. 
When $q$ is large, \Cref{cor:LLL-boost} is stronger.
\end{remark}

For $t=2$, we provide further improvements to the lower bound.

\begin{lemma}\label{lem:Rq2k-lower}
For every integer $k\ge 1$, the following holds.
\begin{itemize}
    \item If $q=2$,
        \[
        R_2(2;k)>\frac32 k-O(1).
        \]
    \item If $q=3$ or $q=4$,
        \[
        R_q(2;k)>2k-O_q(1).
        \]
    \item If $q=5$,
        \[
        R_5(2;k)>3k-O(1).
        \]
    \item For every fixed prime power $q\ge7$,
        \[
        R_q(2;k)>\frac q2 k-O_q(1).
        \]
\end{itemize}
\end{lemma}

\begin{proof}
Assume that there exist constants $\alpha>0$ and $C$ such that
\[
  \chi_q(n)\le \alpha n+C
  \quad\text{for all }n\ge2.
\]
Fix $k>C$.
If
\[
n\le \left\lfloor\frac{k-C}{\alpha}\right\rfloor,
\]
then $\alpha n+C\le k$, so $\chi_q(n)\le k$.
By \eqref{eq:chi-vs-Rq} with $t=2$, this implies $R_q(2;k)>n$.
Hence
\[
R_q(2;k)>\frac{k}{\alpha}-O(1).
\]

For $q=2$, we use $\alpha=2/3$, which follows from \Cref{thm:main}. 
For $q=3$ and $q=4$, we use $\alpha=1/2$ from \Cref{thm:chiq-upper-bound}. 
For $q=5$, we use $\alpha=1/3$ from \Cref{thm:chiq-upper-bound}. 
For fixed $q\ge7$, we use $\alpha=2/q$ from \Cref{thm:chiq-upper-bound}. 
This gives the stated bounds.
\end{proof}

The best upper bounds on $R_q(t; k)$ in general are tower functions of height $(k - 1)(t - 1) + 1$ \cite[Theorem 1.3]{Frederickson2023}.
For $q = 2$, Hunter and Pohoata \cite{Hunter2025} used the breakthrough result of Kelley and Meka \cite{KelleyMeka2023} to show that $R_2(2; k) = O(k \log^7 k)$.
From the connection to $R(3; k)$ we immediately get the following.
\begin{reptheorem}{thm:R22k-upper}
$R_2(2;k)\le k\log k + O(1)$.
\end{reptheorem}

\begin{proof}
Let $n = R_2(2; k) - 1$. 
We assume $k \geq 2$.
Because there is a coloring of the points of $\PG(n - 1, 2)$ with $k$ colors and no monochromatic lines, \eqref{eq:Ramsey_connection} implies that 
\[
2^n < R(3; k) \leq e k! + 1 \leq 2^{k \log k + \log e}.
\]
Since $n = R_2(2; k) - 1$, this gives
\[
R_2(2; k) < \log R(3;k) + 1 < k \log k + \log e + 1.
\]

\end{proof}

\section{Further directions}\label{sec:conclusion}
\subsection{Binary projective spaces}\label{sec:open-binary}
The central problem is to determine the asymptotic behavior of $\chi_2(n)$.
\[
\text{Is }\ \chi_2(n)=\Theta(n)\ \text{ or }\ \chi_2(n)=o(n)\ ?
\]
Currently, we have
\[
\chi_2(n)\ =\ \Omega \Bigl(\frac{n}{\log n}\Bigr)
\qquad\text{and}\qquad
\chi_2(n)\ \le\ \lfloor 2n/3 \rfloor + 1.
\]
Proving $\chi_2(n)=o(n)$ would be a major breakthrough. 
A sublinear growth of $\chi_2(n)$ would force much stronger lower bounds for multicolor triangle Ramsey numbers than is currently known.
As a small step towards that goal, our recursion in~\Cref{lem:recursion} shows that a good bound $\chi_2(s)\le t$ for small values of $s$ and $t$ implies stronger asymptotic bounds, as follows.

\begin{proposition}\label{prop:main_generalized}
 For all integers $n\geq d\geq 2$,
    $$\chi_2(n)\leq \left\lfloor \frac{\chi_2(d)-1}{d-1} \ n \right \rfloor +C_d, $$
    where $1\leq C_d \leq \chi_2(d)-1$ is a constant depending only on $d$. Moreover, one can take $C_4=1$.
\end{proposition}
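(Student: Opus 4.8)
The plan is to fix $d$, iterate the recursion of \Cref{lem:recursion} with parameter $d$ while keeping careful track of the floor function, and then check that the additive constant produced can be taken in $[1,\chi_2(d)-1]$; only this last check is nontrivial.

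Write $\beta_d:=\frac{\chi_2(d)-1}{d-1}$. For $n\ge d+1$, \Cref{lem:recursion} with parameter $d$ reads $\chi_2(n)\le \chi_2(n-(d-1))+(\chi_2(d)-1)$. First I would iterate this exactly $s:=\lfloor(n-2)/(d-1)\rfloor$ times — the largest number of times one can subtract $d-1$ and still apply the lemma legitimately, since the argument stays $\ge d+1$ until the last step. This leaves a remainder $r:=n-s(d-1)\in\{2,\dots,d\}$ and gives
\[
\chi_2(n)\ \le\ \chi_2(r)+s\bigl(\chi_2(d)-1\bigr).
\]
The crucial observation is that $s(\chi_2(d)-1)=\beta_d(n-r)$ is an integer, so $\beta_d n$ and $\beta_d r$ have the same fractional part and $\beta_d(n-r)=\lfloor\beta_d n\rfloor-\lfloor\beta_d r\rfloor$; hence $\chi_2(n)\le \lfloor\beta_d n\rfloor+(\chi_2(r)-\lfloor\beta_d r\rfloor)$ for every $n\ge d$ (the case $n=d$ being trivial, $s=0$). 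I would therefore take
\[
C_d:=\max\Bigl(1,\ \max_{2\le r\le d}\bigl(\chi_2(r)-\lfloor\beta_d r\rfloor\bigr)\Bigr),
\]
which is $\ge1$ by construction; for $d=4$ one checks $\chi_2(r)-\lfloor 2r/3\rfloor=1$ for each $r\in\{2,3,4\}$, so $C_4=1$ and the bound reduces to \Cref{thm:main}.

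It then remains to prove $C_d\le\chi_2(d)-1$, i.e. $\chi_2(r)-\lfloor\beta_d r\rfloor\le\chi_2(d)-1$ for all $r\in\{2,\dots,d\}$. This is immediate when $\chi_2(r)<\chi_2(d)$ (as $\chi_2$ is non-decreasing and $\lfloor\beta_d r\rfloor\ge0$), and for $r=d$ it follows from $\beta_d d>\chi_2(d)-1\ge1$. The remaining case is $r<d$ with $\chi_2(r)=\chi_2(d)=:j$ — note $j\ge3$ here, since $\chi_2(d)=2$ forces $d=2$ — where one must show $\lfloor\beta_d r\rfloor\ge1$, that is $r(j-1)\ge d-1$. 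I would get this by combining two bounds: \Cref{thm:main} gives $j=\chi_2(r)\le\lfloor 2r/3\rfloor+1$, hence $r\ge\tfrac{3(j-1)}{2}$ and $r(j-1)\ge\tfrac{3(j-1)^2}{2}$; while $\chi_2(d)=j$ together with \eqref{eq:Ramsey_connection} (with $k=j$, $n=d$) and the classical estimate $R(3;j)\le e\,j!+1$ \cite{GreenwoodGleason1955} forces $2^d<e\,j!+1$, so $d-1\le\log(e\,j!+1)-1$. Thus it suffices that $\tfrac{3(j-1)^2}{2}+1\ge\log(e\,j!+1)$, which, using $\log(e\,j!+1)\le1+\log e+(j-1)\log j$, follows from $\tfrac{3(j-1)^2}{2}\ge\log e+(j-1)\log j$; this quadratic-versus-$(j\log j)$ inequality holds at $j=3$ and the gap only widens for larger $j$.

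The step I expect to be the real obstacle is this last one — bounding $r(j-1)$ from below, i.e. controlling how long $\chi_2$ can stay constant, which is exactly where one needs \Cref{thm:main} and the Ramsey-theoretic upper bound. Everything before it is routine bookkeeping; the one point requiring care there is ensuring the recursion is invoked only for arguments $\ge d+1$ and that the remainder $r$ really lies in $\{2,\dots,d\}$.
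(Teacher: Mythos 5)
Your proof is correct, and it should be said up front that the paper contains no written proof of \Cref{prop:main_generalized}: the proposition is stated in Section~\ref{sec:conclusion} as a consequence of \Cref{lem:recursion}, to be obtained exactly as \Cref{thm:main} was, by iterating the recursion with parameter $d$. Your bookkeeping reproduces that implicit argument faithfully: with $s=\lfloor(n-2)/(d-1)\rfloor$ every application of the lemma has argument at least $d+1$, the remainder $r$ lies in $\{2,\dots,d\}$, and since $s(\chi_2(d)-1)=\beta_d(n-r)$ is an integer, the identity $\beta_d(n-r)=\lfloor\beta_d n\rfloor-\lfloor\beta_d r\rfloor$ is legitimate, yielding $C_d=\max\bigl(1,\max_{2\le r\le d}(\chi_2(r)-\lfloor\beta_d r\rfloor)\bigr)$ and the verification $C_4=1$ (indeed $\chi_2(r)-\lfloor 2r/3\rfloor=1$ for $r=2,3,4$). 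Where you go beyond what the paper records is the claim $C_d\le\chi_2(d)-1$, which the paper asserts without justification; you correctly isolate the only nontrivial case ($2\le r<d$ with $\chi_2(r)=\chi_2(d)=j$, necessarily $j\ge3$ by monotonicity of $\chi_2$), and your argument there is sound: \Cref{thm:main} gives $r\ge\tfrac{3}{2}(j-1)$, while \eqref{eq:Ramsey_connection} combined with $R(3;j)\le e\,j!+1$ gives $2^d<e\,j!+1$, and the comparison $\tfrac{3}{2}(j-1)^2\ge\log e+(j-1)\log j$ does hold for all $j\ge3$ (it holds at $j=3$ and the difference is increasing), so $r(j-1)\ge d-1$ and $\lfloor\beta_d r\rfloor\ge1$ as needed. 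Using \Cref{thm:main} and the Greenwood--Gleason bound here is not circular, since both are established before this proposition; so your proof is the paper's intended iteration plus a correct treatment of the one point the paper leaves implicit.
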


\begin{corollary}
For every integer $d\geq 2$,
$$\lim_{k \rightarrow \infty} R(3;k)^{1/k} \geq 2^{(d-1)/(\chi_2(d)-1)}.$$
\end{corollary}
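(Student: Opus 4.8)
The plan is to combine Proposition~\ref{prop:main_generalized} with the Ramsey connection \eqref{eq:Ramsey_connection}. Fix an integer $d \geq 2$ and set $\beta_d = (\chi_2(d)-1)/(d-1)$, so that Proposition~\ref{prop:main_generalized} gives
\[
\chi_2(n) \;\le\; \left\lfloor \beta_d\, n \right\rfloor + C_d \;\le\; \beta_d\, n + C_d
\qquad\text{for all } n \ge d .
\]
The strategy is: for each large $k$, find the largest $n$ that this bound certifies to have $\chi_2(n)\le k$, and feed it into \eqref{eq:Ramsey_connection}.

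First I would, for each integer $k > C_d$, set $n_k = \big\lfloor (k - C_d)/\beta_d \big\rfloor$; since $n_k \to \infty$, we have $n_k \ge d$ once $k$ is large. By construction $\beta_d\, n_k + C_d \le k$, hence $\chi_2(n_k) \le \lfloor \beta_d n_k\rfloor + C_d \le k$, and \eqref{eq:Ramsey_connection} yields $R(3;k) > 2^{\,n_k}$. Taking $k$-th roots gives $R(3;k)^{1/k} > 2^{\,n_k/k}$, and because $n_k/k \to 1/\beta_d = (d-1)/(\chi_2(d)-1)$ as $k\to\infty$, we conclude
\[
\liminf_{k\to\infty} R(3;k)^{1/k} \;\ge\; 2^{(d-1)/(\chi_2(d)-1)} .
\]

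It then remains to replace the $\liminf$ by an honest limit. Here I would invoke the standard fact that $\lim_{k\to\infty} R(3;k)^{1/k}$ exists: a product (blow-up) construction on the vertex set $[N_1]\times[N_2]$ shows $R(3;k_1+k_2) - 1 \ge (R(3;k_1)-1)(R(3;k_2)-1)$, so $m \mapsto \log\bigl(R(3;m)-1\bigr)$ is superadditive; Fekete's lemma then gives convergence of $\tfrac1k\log\bigl(R(3;k)-1\bigr)$, and the limit is finite because $R(3;k) \le ek!+1$. Since the limit exists, it equals the liminf above, which proves the corollary. (Equivalently, one could iterate Lemma~\ref{lem:recursion} directly along $n = m(d-1)+1$ to get $\chi_2\bigl(m(d-1)+1\bigr) \le m(\chi_2(d)-1)+1$ and apply \eqref{eq:Ramsey_connection} with $k = m(\chi_2(d)-1)+1$, obtaining the same exponent along that subsequence.)

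Nothing in this argument is delicate, so I do not expect a real obstacle; the only point requiring care is bookkeeping the floors and the off-by-$O(1)$ error terms so that $n_k/k$ genuinely tends to $1/\beta_d$, together with the (routine but non-vacuous) observation that the corollary is stated for $\lim$, which forces one to cite or establish the existence of $\lim_{k\to\infty} R(3;k)^{1/k}$ rather than merely bounding the liminf.
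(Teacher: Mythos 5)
Your argument is correct and is essentially the paper's proof, which simply combines Proposition~\ref{prop:main_generalized} with the implication \eqref{eq:Ramsey_connection}; your extra bookkeeping with $n_k=\lfloor (k-C_d)/\beta_d\rfloor$ and the superadditivity/Fekete justification of the existence of $\lim_k R(3;k)^{1/k}$ just makes explicit what the paper leaves implicit. One caveat: your parenthetical claim that the limit is \emph{finite} because $R(3;k)\le e\,k!+1$ is wrong, since $(k!)^{1/k}\to\infty$ (finiteness of this limit is exactly the content of Erd\H{o}s's conjecture discussed in Section~\ref{sec:intro}); this is harmless here, because Fekete's lemma gives existence of the limit in $[0,\infty]$ and the claimed inequality holds trivially when the limit is infinite, but you should drop or correct that remark.
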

\begin{proof}
Combine~\Cref{prop:main_generalized} with the implication in (\ref{eq:Ramsey_connection}).
\end{proof}
Applying this to $d = 4$ from Theorem~\ref{thm:main}, we obtain:
\[
\lim_{k\to\infty} R(3;k)^{1/k}\ \ge\ 2^{3/2}\ \approx\ 2.828.
\]
The best-known lower bound is
\[
\lim_{k\to\infty} R(3;k)^{1/k}\ \geq (380)^{1/5} \approx 3.2806 \; \cite{Ageron2021}
\]
As a first concrete step, it is natural to improve the constant $2/3$ in the upper bound of $\chi_2(n)$.
\[
\chi_2(8)= 5  \ \Rightarrow\ \lim_{k\to\infty} R(3;k)^{1/k}\ \ge\ 2^{7/4}\ \approx\ 3.364,
\]
\[
\chi_2(13)\le 8 \ \Rightarrow\ \lim_{k\to\infty} R(3;k)^{1/k}\ \ge\ 2^{12/7}\ \approx\ 3.2813.
\]
Thus, proving $\chi_2(8)=5$ or $\chi_2(13)=8$ would improve the current best lower bound on the growth rate of $R(3;k)$.
\begin{problem}
    Determine whether $\chi_2(8)=5$ or $\chi_2(13)=8$.
\end{problem}

\subsection{Non-binary projective spaces}\label{sec:open-nonbinary}
\begin{problem}
Fix a prime power $q\ge 3$.
Determine the asymptotic growth of $\chi_q(n)$.
Is $\chi_q(n)=\Theta(n)$?
\end{problem}

As a first step in this direction, one can try to improve the constants in our linear upper bounds.
It will be interesting to improve the constant $\alpha_q$ such that
\[
\chi_q(n)\ \le\ \alpha_q n + O(1).
\]
By \Cref{thm:chiq-upper-bound}, we have $\alpha_q\le 1/2$ for $q=3,4$, $\alpha_5\le 1/3$, and $\alpha_q\le 2/q$ for $q\ge7$.

\subsection{General setting of vector space Ramsey numbers}
As we have shown, improving the bounds in the more general setting of $\chi_q(t; n)$, will have direct consequences on the vector space Ramsey numbers $R_q(t; k)$. 

\begin{problem}
Fix a prime power $q$ and an integer $t\ge 3$.
Determine the asymptotic growth of $\chi_q(t;n)$.
Is it true that $\chi_q(t;n)=\Theta_{t,q}(n)$?
\end{problem}

If one improves the lower bounds on $\chi_q(t;n)$, then one also improves the upper bounds on vector space Ramsey numbers $R_q(t;k)$.
The current upper bounds on $R_q(t;k)$ for $q=3$ have tower growth (see \cite{Frederickson2023}), in contrast to the linear lower bounds obtained here.   
Determining the exact value of $\chi_q(t; n)$ for small values of $q, t,$ and $n$, will also be interesting, as it can improve the general upper bounds via the recursion that we have established in our work. 

\section*{Acknowledgements}
The authors thank Ferdinand Ihringer, Sam Mattheus, and Liana Yepremyan for valuable insights and discussions.


\begin{thebibliography}{99}

\bibitem{AbbottHanson1972}
H.~L. Abbott and D.~Hanson,
\emph{A problem of Schur and its generalizations},
Acta Arith. \textbf{20} (1972), 175--187.

\bibitem{Ageron2021}
R.~Ageron, P.~Casteras, T.~Pellerin, Y.~Portella, A.~Rimmel, and J.~Tomasik,
\emph{New lower bounds for Schur and weak Schur numbers},
arXiv:2112.03175 [math.CO], 2021; revised 2022.
\url{https://arxiv.org/abs/2112.03175}

\bibitem{Ball2015}
S.~Ball,
\emph{Finite Geometry and Combinatorial Applications},
London Mathematical Society Student Texts, vol.~82,
Cambridge University Press, Cambridge, 2015.

\bibitem{Beutelspacher1980}
A.~Beutelspacher,
\emph{Blocking sets and partial spreads in finite projective spaces},
Geom. Dedicata \textbf{9} (1980), no.~4, 425--449.

\bibitem{Bishnoi2025}
A.~Bishnoi, W.~Cames van Batenburg, and A.~Ravi,
\emph{Data underlying the publication: The chromatic number of projective spaces},
Version~2, 4TU.ResearchData, 2025.
\url{https://doi.org/10.4121/7f8caedc-9c09-478f-b76d-3b2d88a87635.v2}

\bibitem{BrandesPhelpsRodl1982}
M.~de Brandes, K.~T. Phelps, and V.~R\"odl,
\emph{Coloring Steiner triple systems},
SIAM J. Algebraic Discrete Methods \textbf{3} (1982), no.~2, 241--249.
\url{https://doi.org/10.1137/0603023}

\bibitem{BruenHaddadWehlau1998}
A.~Bruen, L.~Haddad, and D.~Wehlau,
\emph{Caps and Colouring Steiner Triple Systems},
Designs Codes Cryptogr. \textbf{13} (1998), no.~1, 51--55.
\url{https://doi.org/10.1023/A:1008293805734}

\bibitem{Cassetta1998}
N.~Cassetta,
\emph{The non-existence of blocking sets in $\mathrm{PG}(3,4)$},
Ital. J. Pure Appl. Math. \textbf{3} (1998), 79--84.

\bibitem{Chung1973}
F.~R.~K. Chung,
\emph{On the Ramsey numbers $N(3,3,\ldots,3;2)$},
Discrete Math. \textbf{5} (1973), 317--321.
\url{https://doi.org/10.1016/0012-365X(73)90125-8}

\bibitem{Ebert1985}
G.~L. Ebert,
\emph{Partitioning projective geometries into caps},
Canad. J. Math. \textbf{37} (1985), no.~6, 1163--1175.
\url{https://doi.org/10.4153/CJM-1985-063-1}

\bibitem{FettesKramerRadziszowski2004}
S.~E. Fettes, R.~L. Kramer, and S.~P. Radziszowski,
\emph{An upper bound of $62$ on the classical Ramsey number $R(3,3,3,3)$},
Ars Combin. \textbf{72} (2004), 41--63.

\bibitem{Frederickson2023}
B.~Frederickson and L.~Yepremyan,
\emph{Vector space Ramsey numbers and weakly Sidorenko affine configurations},
Q. J. Math. \textbf{76} (2025), no.~1, 77--94.
\url{https://doi.org/10.1093/qmath/haae060}

\bibitem{Fugere1994}
J.~Fug\`ere, L.~Haddad, and D.~Wehlau,
\emph{5-chromatic Steiner triple systems},
J. Combin. Des. \textbf{2} (1994), no.~5, 287--299.
\url{https://doi.org/10.1002/jcd.3180020503}

\bibitem{FuhrerTaranchuk2024}
J.~F\"uhrer and V.~Taranchuk,
\emph{Large line-free sets and their applications},
arXiv:2403.18611 [math.CO], 2024; version~2, 2026.
\url{https://arxiv.org/abs/2403.18611}

\bibitem{GrahamLeebRothschild1972}
R.~L. Graham, K.~Leeb, and B.~L. Rothschild,
\emph{Ramsey's theorem for a class of categories},
Adv. Math. \textbf{8} (1972), 417--433.
\url{https://doi.org/10.1016/0001-8708(72)90005-9}

\bibitem{GreenwoodGleason1955}
R.~E. Greenwood and A.~M. Gleason,
\emph{Combinatorial relations and chromatic graphs},
Canad. J. Math. \textbf{7} (1955), 1--7.
\url{https://doi.org/10.4153/CJM-1955-001-4}

\bibitem{Haddad1999}
L.~Haddad,
\emph{On the chromatic numbers of Steiner triple systems},
J. Combin. Des. \textbf{7} (1999), no.~1, 1--10.

\bibitem{HirschfeldThas2025}
J.~W.~P. Hirschfeld and J.~A. Thas,
\emph{Arcs, caps and generalisations in a finite projective space},
Mathematics \textbf{13} (2025), no.~9, Article~1489.
\url{https://doi.org/10.3390/math13091489}

\bibitem{Hunter2025}
Z.~Hunter and C.~Pohoata,
\emph{On off-diagonal Ramsey numbers for vector spaces over $\mathbb{F}_{2}$},
Math. Proc. Cambridge Philos. Soc. \textbf{179} (2025), no.~2, 503--518.
\url{https://doi.org/10.1017/S0305004125000209}

\bibitem{KelleyMeka2023}
Z.~Kelley and R.~Meka,
\emph{Strong bounds for 3-progressions},
in \emph{2023 IEEE 64th Annual Symposium on Foundations of Computer Science (FOCS)},
IEEE, 2023, pp.~933--973.
\url{https://doi.org/10.1109/FOCS57990.2023.00059}

\bibitem{NR01}
J.~Ne\v{s}et\v{r}il and M.~Rosenfeld,
\emph{I. Schur, C. E. Shannon and Ramsey numbers, a short story},
Discrete Math. \textbf{229} (2001), no.~1--3, 185--195.

\bibitem{RadziszowskiSurvey2017}
S.~P. Radziszowski,
\emph{Small Ramsey numbers},
Electron. J. Combin. \textbf{1} (1994), Dynamic Survey DS1, Revision~18, 2026.
\url{https://www.cs.rit.edu/~spr/ElJC/eline.html}

\bibitem{Rajola1988}
S.~Rajola,
\emph{A blocking set in $\mathrm{PG}(3,q)$, $q\ge 5$},
in \emph{Combinatorics '86}, Ann. Discrete Math., vol.~37,
North-Holland, Amsterdam, 1988, pp.~391--394.

\bibitem{Richardson1956}
M.~Richardson,
\emph{On finite projective games},
Proc. Amer. Math. Soc. \textbf{7} (1956), no.~3, 458--465.
\url{https://doi.org/10.1090/S0002-9939-1956-0079543-2}

\bibitem{Rosa1970-1}
A.~Rosa,
\emph{On the chromatic number of Steiner triple systems},
in \emph{Combinatorial Structures and Their Applications},
Proc. Conf. Calgary 1969, Gordon \& Breach, New York, 1970, pp.~369--371.

\bibitem{Rosa1970-2}
A.~Rosa,
\emph{Steiner triple systems and their chromatic number},
Acta Fac. Rerum Natur. Univ. Comenian. Math. \textbf{24} (1970), 159--174.

\bibitem{Rosa2019}
A.~Rosa,
\emph{Greedy triple systems},
Bull. ICA \textbf{85} (2019), 74--78.

\bibitem{Rowley2021}
F.~Rowley,
\emph{A generalised linear Ramsey graph construction},
Australas. J. Combin. \textbf{81} (2021), no.~2, 245--256.

\bibitem{SanchezFlores1995}
A.~T. S\'anchez-Flores,
\emph{An improved upper bound for Ramsey number $N(3,3,3,3;2)$},
Discrete Math. \textbf{140} (1995), no.~1--3, 281--286.
\url{https://doi.org/10.1016/0012-365X(93)E0187-9}

\bibitem{Schur1917}
I.~Schur,
\emph{\"Uber die Kongruenz $x^m+y^m=z^m \pmod p$},
Jahresber. Dtsch. Math.-Ver. \textbf{25} (1917), 114--116.

\bibitem{Spencer1979}
J.~H. Spencer,
\emph{Ramsey's theorem for spaces},
Trans. Amer. Math. Soc. \textbf{249} (1979), no.~2, 363--371.

\bibitem{Tallini1988}
G.~Tallini,
\emph{On blocking sets in finite projective and affine spaces},
in \emph{Combinatorics '86}, Ann. Discrete Math., vol.~37,
North-Holland, Amsterdam, 1988, pp.~433--450.

\bibitem{XuRadziszowski2016}
X.~Xu and S.~P. Radziszowski,
\emph{On some open questions for Ramsey and Folkman numbers},
in \emph{Graph Theory: Favorite Conjectures and Open Problems -- 1},
R.~Gera, S.~Hedetniemi, and C.~Larson (eds.),
Problem Books in Mathematics, Springer, Cham, 2016, pp.~43--62.

\bibitem{ZhaoPM}
Y.~Zhao,
\emph{Probabilistic Methods in Combinatorics},
lecture notes for MIT 18.226, Fall 2022; last updated June~18, 2024.
\url{https://yufeizhao.com/pm/probmethod_notes.pdf}

\end{thebibliography}
\end{document}